\documentclass[11pt]{article}
\usepackage[T1]{fontenc}
\usepackage[a4paper,margin=1.1in]{geometry}
\usepackage{amsmath,amssymb,amsthm,mathtools}
\usepackage{enumitem}
\usepackage{float}
\usepackage{microtype}
\usepackage[sort]{cite}
\usepackage{authblk}
\usepackage{xcolor}
\usepackage{hyperref}
\definecolor{mycyanblue}{HTML}{0080AC}
\hypersetup{
    colorlinks=true,
    allcolors=mycyanblue
}

\usepackage[nameinlink,capitalize]{cleveref}
\usepackage{tikz}
\usepackage{graphicx}
\usetikzlibrary{arrows.meta,calc,fit,positioning}

\newtheorem{theorem}{Theorem}[section]
\newtheorem{lemma}[theorem]{Lemma}
\newtheorem{proposition}[theorem]{Proposition}
\newtheorem{corollary}[theorem]{Corollary}

\theoremstyle{definition}

\theoremstyle{remark}
\newtheorem{remark}[theorem]{Remark}

\newcommand{\SpecC}{\mathcal S}
\newcommand{\CharC}{\mathcal C}
\newcommand{\dist}{\operatorname{dist}}
\newcommand{\rhoA}{\rho_A}

\title{The Distance Between the Adjacency Spectral Center\\ and the Characteristic Set of a Tree}

\author{
Chaochao Zhu\textsuperscript{a *},\quad
Shipei Hu\textsuperscript{a},\quad
Jingfu Huang\textsuperscript{a},\quad
Qin Yue\textsuperscript{a}\\
{\small
\textsuperscript{a}College of Finance and Mathematics, West Anhui University, Lu'an 237012, China}}
\date{}

\begin{document}
\maketitle

\begingroup
\renewcommand{\thefootnote}{}
\footnotetext{
*\,Corresponding author. E-mail: zccjsbz@amss.ac.cn\\
\texttt{081018024@fudan.edu.cn(S.Hu), huangjingfu@wxc.edu.cn(J.Huang), yqer@163.com(Q.Yue)} 
}
\endgroup

\begin{abstract}
Let $\SpecC(T)$ be the adjacency spectral center of a tree $T$, and let $\CharC(T)$ be its characteristic set. We determine the largest possible separation
\(d(T):=\dist_T\bigl(\SpecC(T),\CharC(T)\bigr)\) among trees of every order $n\ge3$. Writing $\Delta_n:=\max_{|V(T)|=n}d(T)$, we prove
\[
\Delta_n=0\quad(3\le n\le11),\quad
\Delta_{12}=1,
\]
and
\[
\Delta_n=\left\lfloor\frac{n-11}{2}\right\rfloor
 \quad(n\ge13).
\]
The argument rests on a simple opposition between two rooted-tree weights. An endpoint-rooted path minimizes adjacency spectral radius, but maximizes bottleneck Perron value.  A one-sided replacement by a path therefore cannot decrease the distance between the two centers.  Quantitatively, this gives the sharp estimate
\[
|V(T)|\ge 2d(T)+11
  \quad(d(T)\ge2).
\]
A preliminary six-vertex barrier shows that disjoint center sets require at least twelve vertices, and the four-leaf broom is extremal for every $n\ge12$.
\end{abstract}

\section{Introduction}\label{sec:introduction}
Several notions of centrality coexist on a tree, and there is no reason for them to agree. The ordinary center is metric, the centroid is controlled by component sizes, and the subtree core by subtree counts. We compare two centers of spectral origin.

Let $T$ be a tree.  The first center comes from the second adjacency eigenvalue $\lambda_2(T)$. Neumaier's theory, in the formulation of Brouwer and Haemers, yields a unique minimal subtree $Y$ such that every component of $T-Y$ has adjacency spectral radius at most $\lambda_2(T)$. The subtree $Y$ is always a vertex or an edge; we write it as $\SpecC(T)$ and call it the adjacency spectral center \cite{Neumaier1982,BrouwerHaemers2012}.  The same object has recently reappeared in extremal questions for the second adjacency eigenvalue of trees~\cite{KumarMoharPragadaZhan2025}.

The second center is the characteristic set $\CharC(T)$ attached to the algebraic connectivity $\mu_2(T)$. On a tree, the sign pattern of a Fiedler vector picks out a vertex or an edge, independent of the chosen Fiedler vector; see Fiedler~\cite{Fiedler1975} and Bapat~\cite{Bapat2014}. Kirkland, Neumann, and Shader gave the equivalent Perron-branch description that will be used here~\cite{KirklandNeumannShader1996}; see also~\cite{AndradeDahl2017,AndradeCiardoDahl2022}.

Distances between central parts of trees have been studied for the center, centroid, characteristic set, subtree core, and related objects \cite{Patra2007,AbreuFritscherJustelKirkland2017, SmithSzekelyWangYuan2018,PandeyPatra2022}. Ciardo's framework of centers induced by rooted-subtree weights places the characteristic set in a broader setting \cite{Ciardo2020}. To the best of our knowledge, however, no sharp comparison has been obtained between the adjacency spectral center and the characteristic set. This pair is particularly natural because the two centers are governed by local branch weights with opposite extremal behavior: adjacency spectral radius favors branching, whereas bottleneck Perron value favors long rooted paths.

For nonempty $X,Y\subseteq V(T)$, write
\(
  \dist_T(X,Y):=\min\{d_T(x,y):x\in X,\ y\in Y\},\)
and set
\[
  d(T):=\dist_T\bigl(\SpecC(T),\CharC(T)\bigr),
  \quad
  \Delta_n:=\max\{d(T):|V(T)|=n\}.
\]

For integers $k,\ell\ge1$, let $B(k,\ell)$ denote the broom obtained from a path of length $k$ by adjoining $\ell$ additional leaves at one endpoint. Thus $|V(B(k,\ell))|=k+\ell+1$.

Our main result is exact.

\begin{theorem}\label{thm:main}
For every integer $n\ge3$,
\[
\Delta_n=
  \begin{cases}
    0, & 3\le n\le11,\\[1mm]
    1, & n=12,\\[1mm]
    \displaystyle\left\lfloor\frac{n-11}{2}\right\rfloor,
       & n\ge13.
  \end{cases}
\]
Moreover, the four-leaf broom $B(n-5,4)$ is extremal for every $n\ge12$.
\end{theorem}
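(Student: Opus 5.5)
The proof splits into a lower bound, obtained from the broom $B(n-5,4)$, and an upper bound, $|V(T)|\ge 2d(T)+11$ whenever $d(T)\ge 2$, supplemented by a separate treatment of small orders. Both centers will be described through branch weights at a vertex $v$. For $\SpecC(T)$, the relevant quantities are the spectral radii $\rho(T_i)$ of the components of $T-v$. A vertex $v$ belongs to $\SpecC(T)$, or $\SpecC(T)=\{v\}$, according to how these radii compare with $\lambda_2(T)$; by interlacing, $\lambda_2(T)$ lies between the largest and second-largest branch radius at a suitable vertex. For $\CharC(T)$, we use the Kirkland--Neumann--Shader Perron-branch criterion: at each vertex, compare the Perron values of the bottleneck matrices of the branches. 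The characteristic vertex or edge is where no single branch dominates all the others.

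\textbf{Lower bound.} Label $T=B(n-5,4)$ as follows. Let $c$ be the hub carrying four pendant leaves, and let $c=p_0,p_1,\dots,p_{n-5}$ be the path. For $\SpecC$, the four-star portion $K_{1,4}$ has spectral radius $2$. Any long path segment has radius $<2$, while a branch containing $c$ together with its four leaves and a path tail has radius $>2$. Consequently $\lambda_2(T)$ is slightly below $2$, and $\SpecC(T)$ sits at distance about $1$ from $c$ along the path, at $p_0$ or $p_1$ or on the edge $p_0p_1$. I would verify this by computing characteristic polynomials via the standard recursion $\phi(T)=x\phi(T-v)-\sum\phi(T-v-u)$, using Chebyshev-type closed forms for the path parts. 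For $\CharC$, the bottleneck Perron value of a path branch of length $m$ grows like $m^2$. The branch containing the star is worth roughly a path of length about $m+\text{const}$; here one uses that the bottleneck of the leafy branch behaves like a path with four extra leaves. Balancing the two sides places $\CharC(T)$ near position $(n-5)/2-c_0$ for a constant $c_0$. Computing the exact index gives $d(T)=\lfloor (n-11)/2\rfloor$ for $n\ge13$ and $d=1$ for $n=12$. The parity dependence comes from whether the balance point lands on a vertex or on an edge.

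\textbf{Upper bound.} Suppose $d(T)=d\ge1$, and let $P$ be the path joining $\SpecC(T)$ to $\CharC(T)$. Let $A$ be the side of $T$ beyond $\SpecC$, away from $\CharC$. Since $\CharC$ is not at $\SpecC$, the branch toward $\CharC$ must dominate in Perron value. Since $\SpecC$ is not at $\CharC$, the branch at $\CharC$ containing $A$ must have spectral radius exceeding $\lambda_2$. The key replacement step, as announced in the abstract, is as follows. Replace the part of $T$ on the far side of $\CharC$, opposite to $\SpecC$, by a rooted path with the same number of vertices. This can only decrease that branch's spectral radius and increase its bottleneck Perron value. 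It therefore preserves, or increases, the separation. Thus we may assume the $\CharC$-side is a path. Monotonicity then reduces the $A$-side to a minimal configuration that still forces spectral radius above $\lambda_2$. This is the six-vertex barrier: the side must contain a subtree of spectral radius at least $2$, hence at least $5$ or $6$ vertices, essentially $K_{1,4}$ or $\tilde D_4$-like. Now count vertices. The $A$-side uses at least $6$ vertices including the $\SpecC$ vertex. The path $P$ contributes $d$ vertices. For Perron balance at $\CharC$, the tail beyond it must have length at least comparable to $d$ plus a constant; this is measured against the $m^2$ growth of path bottleneck values versus the leafy branch. Summing gives $n\ge 2d+11$.

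\textbf{Small orders.} For $n\le 11$, the same counting argument with $d=1$ must be made sharp, so that $d\ge1$ forces $n\ge12$. I would deduce this from the six-vertex barrier together with the fact that separated centers require a Perron-dominant tail of length at least $5$. The main obstacle is making the constant $11$ exact. This requires precise control of bottleneck Perron values of paths ($\rho$ of path bottleneck matrices, $\approx 1/(4\sin^2(\pi/(4m+2)))$) compared against the leafy branch. It also requires checking that $\lambda_2$ of the reduced trees stays on the correct side of $2$. I would handle this by explicit trigonometric formulas for path eigenvalues, with a finite computer-free check for the boundary cases $d=1,2$.
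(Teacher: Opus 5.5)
Your outline has the right shape: the broom $B(n-5,4)$, the opposite extremal behaviour of paths for $\rho$ and $\beta$, a six-vertex barrier, and $|V(T)|\ge 2d+11$ for $d\ge2$. However, the upper bound, which is the substance of the theorem, is not proved, and its key step is false. Take $s\in\SpecC(T)$ and $c\in\CharC(T)$ closest. Let $B$ be the component of $T-s$ containing $c$, $m=|V(B)|$, $H$ the rest, and $h=|V(H)|$. The orientation facts you state are not the ones that drive the count. The reverse comparisons are: $\rhoA(H-s)\ge\rho(A(B))\ge 2\cos\frac{\pi}{m+1}$ at $s$, and $\beta(H[d])\le p_{m-d}$ at $c$. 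The first does \emph{not} say that the $\SpecC$-side contains a subtree of spectral radius at least $2$. It depends on the order of the \emph{opposite} side. For example, with $m=7$ and $B\cong P_7$ it is satisfied when $H-s$ is the five-vertex tree with degree sequence $(3,2,1,1,1)$, whose radius is $2\cos\frac{\pi}{8}<2$; such configurations are excluded only by the Perron inequality. So you cannot fix the $\SpecC$-side as a $K_{1,4}$-configuration and then add $d$ and a Perron tail of length $d+\mathrm{const}$. In the paper the constant $11$ comes from a coupled case split on $h$. If $h\ge7$, then $\beta(H[d])\ge\beta(W(d,h-1))>\beta(W(d,5))>p_{d+3}$ gives only $m-d\ge d+4$, which suffices because $h\ge7$. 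If $h=6$, the Perron inequality first forces $m\ge8$. Only then does the adjacency inequality force $H-s\cong K_{1,4}$ (since $2\cos\frac{\pi}{9}>2\cos\frac{\pi}{8}$), and $\beta(W(d+1,4))>p_{d+4}$ yields $m-d\ge d+5$. Your summation hides exactly this trade-off.

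The quantitative tools are also missing. You need the stem--broom domination $\beta(H[d])\ge\beta(W(d,h-1))$ and the benchmarks $\beta(W(a,5))>p_{a+3}$ for $a\ge2$ and $\beta(W(a,4))>p_{a+3}$ for $a\ge3$. These must hold \emph{uniformly} in $a$, since the bound is claimed for every $d\ge2$. An ``$m^2$ growth'' heuristic plus a finite check at $d=1,2$ cannot supply them; the paper uses a positive test vector for $L+e_re_r^{\mathsf T}$ and the monotonicity of $1+z/(z^3-z^2-2z+1)$ in $z=2\cos\frac{\pi}{2a+7}$.

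Likewise the six-vertex barrier, which is what gives $\Delta_n=0$ for $n\le11$, is asserted rather than proved. Showing $m\ge6$ requires combining $\beta(H)\le p_{m-1}$ with adjacency information on $H-s$ and the estimate $\beta(W(2,3))>p_4$. Only then do $\rhoA(H-s)\ge2\cos\frac{\pi}{7}>\sqrt3$ and $\rho(A(K))\le\sqrt{|V(K)|-1}$ give $h\ge6$.

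Your partial path replacement beyond $\CharC$ is unverified: it changes $\lambda_2$, so $\SpecC$ may move. It is also unnecessary, since the two branch inequalities can be used directly.

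Finally, for the lower bound you need no balancing when $n\ge13$. It suffices to combine $\SpecC(B(k,4))=\{v_{k-1},v_k\}$ with the general estimate $d_T(x,c)\le\lfloor n/2\rfloor$ for all $c\in\CharC(T)$ (a consequence of $\beta(R)\le p_{|V(R)|}$), applied at the far endpoint $x=v_0$. At $n=12$ that estimate gives nothing. There you need an explicit comparison such as $\beta(W(2,4))\le12<p_5$, precisely where the large-$a$ pattern $\beta(W(a,4))>p_{a+3}$ fails.
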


The upper bound is contained in a sharper separation statement.

\begin{theorem}\label{thm:intro-separation}
If $T$ is a tree and $d(T)\ge2$, then \(|V(T)|\ge2d(T)+11.\)
\end{theorem}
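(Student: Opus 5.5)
We argue along the path joining the two centers. Suppose $d:=d(T)\ge2$. Choose $s\in\SpecC(T)$ and $c\in\CharC(T)$ with $d_T(s,c)=d$, and let $P=s=v_0,v_1,\dots,v_d=c$ be the connecting path. Removing the edges of $P$ leaves branches hanging at the $v_i$. We use two descriptions. The first is the Neumaier/Brouwer--Haemers characterization: every component of $T-\SpecC(T)$ has adjacency spectral radius at most $\lambda_2(T)$. The second is the Kirkland--Neumann--Shader characterization: $c$ (or the edge containing it) is where the Perron branches of the bottleneck matrices balance, so the branch at $c$ containing $s$ is not the unique Perron branch. The plan is to read off two inequalities pulling in opposite directions, one for each center, and then to minimize $|V(T)|$ subject to both.

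\textbf{Step 1 (reduction to brooms).} Let $A$ be the component of $T-c$ containing $s$, and let $B$ be the union of the other components. The component of $T-s$ containing $c$ contains everything on the $c$ side. Replacing the part hanging beyond $c$ (away from $s$) by a pendant path rooted at $c$ with the same number of vertices does two things:
\begin{itemize}[nosep]
\item it lowers adjacency spectral radii, since the endpoint-rooted path minimizes $\rho_A$;
\item it raises the bottleneck Perron value, since the path maximizes it.
\end{itemize}
The $\lambda_2$ constraint around $s$ is therefore preserved or even relaxed, and the characteristic set moves further from $s$ or stays put. Symmetrically, the part near $s$ can be concentrated into a star of leaves at $s$, together with whatever is needed for $\lambda_2$. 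This yields the statement that $d$ does not decrease under such replacements. We may therefore assume $T$ is a broom-like tree: a path with $\ell$ leaves attached at one end.

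\textbf{Step 2 (the six-vertex barrier and explicit counting).} For $B(k,\ell)$ both centers can be computed. $\lambda_2$ is governed by the path side compared with the star side, so $\SpecC$ sits at the star hub or one step along the path, at the point where $\rho$ of the remaining path pieces equals $\lambda_2\approx 2\cos(\pi/(m+1))$. The characteristic set lies where the bottleneck Perron values balance. A star branch with $\ell$ leaves has Perron value roughly comparable to that of a path of length about $\sqrt{\ell}$ times a constant, while the path has Perron value of order $m^2$. The two balance points therefore separate by about half the path length, minus a constant.

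The constant is fixed as follows:
\begin{itemize}[nosep]
\item the preliminary six-vertex barrier shows that disjointness, i.e. $d\ge1$, needs $n\ge12$;
\item for $d\ge2$, each extra unit of distance costs two vertices, one for each center's balancing condition, because $\SpecC$ moves by one step when the path grows by two.
\end{itemize}
This gives $n\ge 2d+11$. We would verify it by monotonicity in $k$: increasing $k$ by $2$ increases $d$ by at most $1$. We would also check that $\ell=4$ is optimal, since more leaves waste vertices and fewer cannot push $\SpecC$ far enough.

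\textbf{Main obstacle.} The hardest step is making Step 1 rigorous. We need an honest monotonicity statement for $\SpecC$ under the replacement, which requires comparing $\lambda_2(T')$ with $\lambda_2(T)$ and not just the branch radii, because $\lambda_2$ itself changes. We would first try interlacing: $\lambda_2$ is squeezed between the spectral radii of the branches at $s$. That should show that $\SpecC$ of the new tree lies on the $s$ side of its old position, while $\CharC$ lies on the $c$ side, so the distance does not shrink. A careful inequality is then still needed to get the exact additive constant $11$, via explicit Perron-value computations on brooms.
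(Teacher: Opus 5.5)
Your proposal has a genuine gap in each step, and between them the constant $11$ is never actually derived.

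\textbf{Step 1 (reduction to brooms) fails on the $s$-side.} This is not a technicality. Let $B$ be the component of $T-s$ containing $c$, with $m=|V(B)|$, and let $H=T-B$ be rooted at $s$, with $h=|V(H)|$. The two centers impose two constraints on $H$:
\begin{itemize}[nosep]
\item the spectral center at $s$ forces $\rhoA(H-s)\ge\rho(A(B))\ge 2\cos\frac{\pi}{m+1}$;
\item the characteristic set at $c$ forces $\beta(H[d])\le p_{m-d}$, where $H[d]$ is $H$ with a $d$-vertex stem ending at $s$.
\end{itemize}
These constraints pull in opposite directions. Making every vertex of $H-s$ a leaf at $s$ does minimize $\beta(H[d])$. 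But it makes $H-s$ edgeless, so $B$ becomes the unique branch of maximum radius at $s$, and $\SpecC$ acquires a vertex of $B$. The configuration you wanted to preserve is destroyed. Your phrase ``together with whatever is needed for $\lambda_2$'' is exactly the extremal problem itself, so no reduction to brooms is available. This is why the paper's pathification is one-sided. The star idea survives only as an \emph{inequality}, $\beta(H[d])\ge\beta(W(d,h-1))$, not as a replacement of $T$. Separately, even your $c$-side replacement, which changes only the part beyond $c$, does not follow from Collatz--Sinogowitz, which compares whole connected graphs. You would need a grafting lemma, or you could replace all of $B$ by a path $P_m$ at $s$ as the paper does.

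\textbf{Step 2 is heuristic and, as written, gives the wrong constant.} Starting from $n\ge12$ at $d=1$ and charging two vertices per further unit of distance gives $n\ge 12+2(d-1)=2d+10$, not $2d+11$. The whole content of the theorem is that extra vertex: orders $13$ and $14$ still allow only $d\le1$. You also have the mechanism reversed. The paper shows $\SpecC(B(k,4))=\{v_{k-1},v_k\}$ for every $k\ge2$, so the spectral center does not move; it is $\CharC$ that drifts along the path.

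What is missing is a sharp comparison between rooted brooms and paths. The paper proves
\begin{itemize}[nosep]
\item $\beta(W(a,5))>p_{a+3}$ for $a\ge2$, and
\item $\beta(W(a,4))>p_{a+3}$ for $a\ge3$,
\end{itemize}
using a positive test vector whose residual at the hub has an exactly computable sign. It then assumes $m+h\le 2d+10$, with $m,h\ge6$ from the six-vertex barrier, and splits into cases:
\begin{itemize}[nosep]
\item If $h\ge7$, then $m-d\le d+3$, and $\beta(H[d])\ge\beta(W(d,h-1))>\beta(W(d,5))>p_{d+3}\ge p_{m-d}$, a contradiction.
\item If $h=6$ and $m\le7$, the same chain works, since $m-d\le5\le d+3$.
\item Only when $h=6$ and $m\ge8$ does the adjacency constraint enter. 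Then $\rhoA(H-s)\ge 2\cos\frac{\pi}{9}$ forces the five-vertex forest $H-s$ to be $K_{1,4}$. Hence $\beta(H[d])\ge\beta(W(d+1,4))>p_{d+4}\ge p_{m-d}$, again a contradiction.
\end{itemize}
No reduction of $T$ to a broom is needed at any point.
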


There is already a sharp small-order obstruction behind the formula: if the two center sets are disjoint, then the two sides of a shortest center-to-center path each contain at least $6$ vertices. Thus order $12$ is the first at which positive separation can occur. Under the stronger hypothesis $d(T)\ge2$, the same two branch inequalities become rigid enough to produce the constant $11$: in a hypothetical counterexample, heads of order at least $7$ fail on the Perron side, leaving the borderline case of order $6$, where the adjacency inequality forces a four-leaf star.

The mechanism behind the proof is a one-sided replacement which we call \emph{pathification}. An endpoint-rooted path is extremal in opposite directions for the two relevant weights: it minimizes adjacency spectral radius among connected graphs of fixed order, but maximizes bottleneck Perron value among rooted trees of fixed order. Replacing one side of the tree by a path therefore pushes the two center mechanisms in the required directions. The pathification theorem records this qualitative fact; the same two inequalities, used quantitatively, give Theorem~\ref{thm:intro-separation}.

\section{Preliminaries}\label{sec:preliminaries}
Throughout, all graphs are finite, simple, and undirected. For a graph $G$, write $A(G)$ and $L(G)$ for its adjacency and Laplacian matrices. If $G$ is connected, its adjacency eigenvalues are ordered as
\[
\lambda_1(G)\ge\lambda_2(G)\ge\cdots\ge\lambda_n(G),
\]
and its Laplacian eigenvalues as
\[
0=\mu_1(G)<\mu_2(G)\le\cdots\le\mu_n(G).
\]
For a graph $F$ that is not necessarily connected, put
\[
  \rhoA(F):=\max\{\rho(A(K)):K\text{ is a connected component of }F\}.
\]
For the empty graph we set $\rhoA(\varnothing):=0$. Thus, for every nonempty graph $F$, one has $\rhoA(F)=\rho(A(F))$.

\bigskip
\noindent\textit{The adjacency spectral center.}
\bigskip

Let $T$ be a tree with at least two vertices and let $\lambda=\lambda_2(T)$. The adjacency spectral center $\SpecC(T)$ is the unique minimal subtree $Y$ such that every
component of $T-Y$ has adjacency spectral radius at most $\lambda$. It is known that $\SpecC(T)$ consists of either one vertex or the two endpoints of one edge~\cite{BrouwerHaemers2012}.

For $v\in V(T)$, the components of $T-v$ are the \emph{branches at $v$}. The spectral center is detected locally by their radii.

\begin{lemma}[Adjacency branch criterion]\label{lem:adjacency-branch}
Let $B_1,\dots,B_k$ be the branches of a tree $T$ at a vertex $v$.
\begin{enumerate}[label=\textup{(\roman*)}]
\item If $\max_i\rho(A(B_i))$ is attained by at least two branches, then $\SpecC(T)=\{v\}$.
\item If $B_j$ is the unique branch of maximum adjacency spectral
radius, then
\[
\SpecC(T)\subseteq B_j\cup\{v\},
\quad
  \SpecC(T)\neq\{v\}.
\]
If $v\in\SpecC(T)$, then $\SpecC(T)=\{v,u\}$ where $u$ is the neighbor of $v$ in $B_j$.
\end{enumerate}
\end{lemma}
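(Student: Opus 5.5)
We argue directly from the defining property of $\SpecC(T)$, using two spectral facts about the branches at $v$. Let $\lambda=\lambda_2(T)$ and $\rho_i=\rho(A(B_i))$. Following the Neumaier/Brouwer--Haemers formulation, we take as known that the minimal subtree is contained in every subtree $Y$ for which all components of $T-Y$ have radius at most $\lambda$ (a \emph{feasible} subtree). We also take as known that the empty set is not feasible, since $\rho(A(T))>\lambda_2(T)$ for a connected $T$.

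\emph{Interlacing.} Deleting $v$ and applying Cauchy interlacing gives $\lambda_2(T)\le\lambda_1(T-v)=\max_i\rho_i$. It also gives $\lambda_2(T)\ge\lambda_2(T-v)$. The spectrum of $T-v$ is the multiset union of the spectra of the $B_i$, so $\lambda_2(T-v)$ is at least the second largest value in the list $\rho_1,\dots,\rho_k$, counted with multiplicity. Hence
\[
\text{(second largest }\rho_i)\ \le\ \lambda\ \le\ \max_i\rho_i .
\]

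\emph{Part (i).} If the maximum of the $\rho_i$ is attained twice, the two bounds coincide, so $\lambda=\max_i\rho_i$. Then every component of $T-v$ has radius at most $\lambda$, so $\{v\}$ is feasible. The minimal subtree is nonempty and contained in $\{v\}$, so it equals $\{v\}$.

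\emph{Part (ii): the strict inequality.} Suppose $B_j$ is the unique maximizer, and let $u$ be the neighbour of $v$ in $B_j$. The main point is to show $\lambda<\rho_j$, i.e.\ that $\rho_j$ is not an eigenvalue of $T$. Use the expansion of the characteristic polynomial at $v$ (here $u_i$ is the neighbour of $v$ in $B_i$):
\[
\phi(T,x)=x\,\phi(T-v,x)-\sum_i \phi(B_i-u_i,x)\prod_{k\ne i}\phi(B_k,x).
\]
Evaluate at $x=\rho_j$. We have $\phi(B_j,\rho_j)=0$, and $\phi(B_k,\rho_j)\neq0$ for $k\ne j$ because $\rho_k<\rho_j$. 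So every term vanishes except
\[
\phi(T,\rho_j)=-\phi(B_j-u,\rho_j)\prod_{k\ne j}\phi(B_k,\rho_j).
\]
Moreover $\phi(B_j-u,\rho_j)\ne0$, since the Perron root strictly decreases on deleting a vertex of a connected graph, so $\rho(B_j-u)<\rho_j$. Thus $\phi(T,\rho_j)\neq0$, and together with $\lambda\le\rho_j$ this gives $\lambda<\rho_j$.

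\emph{Part (ii): the three conclusions.}
\begin{itemize}
\item[(a)] $\SpecC(T)\neq\{v\}$: the component $B_j$ of $T-v$ has radius $\rho_j>\lambda$, so $\{v\}$ is not feasible.
\item[(b)] $\SpecC(T)$ meets $B_j$: if a subtree $Y$ misses $B_j\cup\{v\}$, then $B_j$ lies inside a single component of $T-Y$, whose radius is at least $\rho_j>\lambda$ by monotonicity. So $Y$ must meet $B_j\cup\{v\}$. If it meets only $v$ there, then since $Y$ is connected and contains $v$, $B_j$ itself is a component of $T-Y$, again infeasible. So $Y$ meets $B_j$.
\item[(c)] $\SpecC(T)\subseteq B_j\cup\{v\}$: the subtree $\{v\}\cup V(B_j)$ is feasible, because the components of its complement are the $B_i$ with $i\ne j$, and their radii are at most the second largest $\rho_i$, which is at most $\lambda$. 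Containment follows from the fact that the minimal subtree lies in every feasible one.
\end{itemize}
Finally, if $v\in\SpecC(T)$, then $\SpecC(T)$ is connected, contains $v$, and meets $B_j$, so it contains the edge $vu$. Since $\SpecC(T)$ is a vertex or an edge, $\SpecC(T)=\{v,u\}$.

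The main obstacle is the strictness $\lambda_2(T)<\rho_j$. Interlacing alone gives only $\le$, and the characteristic-polynomial computation above is what closes the gap.
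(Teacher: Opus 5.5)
Your proof is correct. Its skeleton matches the paper's: the interlacing sandwich $\lambda_2(T-v)\le\lambda_2(T)\le\lambda_1(T-v)$ settles (i), and (ii) hinges on the strict inequality $\lambda_2(T)<\rho_j$. The difference is in how you obtain that strict inequality.

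The paper argues by contradiction through the equality case of Cauchy interlacing. If $\lambda_2(T)=\rho_j$, there is a $\lambda_2(T)$-eigenvector of $T$ vanishing at $v$ whose restriction is a $\rho_j$-eigenvector of $T-v$. That restriction must be a multiple of the Perron vector of $B_j$, and the eigenvalue equation at $v$ then forces its nonzero value at $u$ to vanish.

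You instead evaluate Schwenk's vertex expansion of $\phi(T,x)$ at $x=\rho_j$ and show directly that $\phi(T,\rho_j)=-\phi(B_j-u,\rho_j)\prod_{k\ne j}\phi(B_k,\rho_j)\neq0$. The two arguments express the same fact: positivity of the Perron vector of $B_j$ at $u$ corresponds to $\phi(B_j-u,\rho_j)\neq0$. Your version is a self-contained computation and proves the slightly stronger statement that $\rho_j$ is not an eigenvalue of $T$ at all. It also avoids the equality-case lemma, which the paper invokes as a standard fact but which needs a short multiplicity argument of its own. The paper's version stays in eigenvector language and matches the Perron-vector reasoning used elsewhere in the paper.

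For the containment $\SpecC(T)\subseteq B_j\cup\{v\}$, you use that the minimal feasible subtree is contained in every feasible subtree, and you exhibit $\{v\}\cup V(B_j)$ as feasible. You take that containment property as known; it does follow from uniqueness of the minimal feasible subtree plus finiteness, once the empty set is excluded as you do. The paper instead combines the defining property with $|\SpecC(T)|\le 2$. Your step (b) together with $|\SpecC(T)|\le2$ would already give the containment, so (c) is a valid alternative rather than a necessary step.
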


\begin{proof}
Let $r_1\ge r_2\ge\cdots$ be the eigenvalues of $A(T-v)$. If the maximum branch spectral radius is attained at least twice, with common value $r$, then
\(\lambda_1(T-v)=\lambda_2(T-v)=r.\) Cauchy interlacing gives
\[
\lambda_1(T-v)\ge\lambda_2(T)\ge\lambda_2(T-v),
\]
so $\lambda_2(T)=r$.  Hence every component of $T-v$ has spectral radius at most $\lambda_2(T)$, and the minimality of the spectral center gives $\SpecC(T)=\{v\}$.

Now suppose that $B_j$ is the unique branch of maximum spectral radius and put $r=\rho(A(B_j))$. Interlacing gives \(r=\lambda_1(T-v)\ge \lambda_2(T).\) We claim that the inequality is strict. Suppose instead that \(r=\lambda_2(T)\). By the equality case of Cauchy interlacing for the principal submatrix $A(T-v)$, there exists a $\lambda_2(T)$-eigenvector $x$ of $T$ with $x_v=0$ such that \(x|_{T-v}\) is an $r$-eigenvector of $T-v$. Since $B_j$ is the unique component of $T-v$ whose spectral radius is $r$, every other component has spectral radius strictly smaller than $r$. Hence \(x|_{T-v}\) is supported on $B_j$, and its restriction to $B_j$ is a nonzero multiple of the Perron vector of $B_j$. In particular, if $u$ is the vertex of $B_j$ adjacent to $v$, then \(x_u\ne0\). But the eigenvalue equation at $v$, together with \(x_v=0\), gives
\[
0=\lambda_2(T)x_v
=\sum_{w\sim v}x_w
  =x_u,
\]
because \(x\) vanishes on every other branch at \(v\). This contradiction shows that
\[
\rho(A(B_j))=r>\lambda_2(T).
\]
For every $i\ne j$, the eigenvalue $\rho(A(B_i))$ occurs below the largest eigenvalue of $A(T-v)$, so
\[
  \rho(A(B_i))\le\lambda_2(T-v)\le\lambda_2(T).
\]
Hence $B_j$ is the unique branch whose spectral radius exceeds $\lambda_2(T)$. Since the spectral center is a vertex or an edge, its defining property forces it to lie in $B_j\cup\{v\}$, and it cannot equal $\{v\}$. The final assertion follows from connectedness and the fact that $|\SpecC(T)|\le2$.
\end{proof}

\medskip
\noindent\textit{The characteristic set and Perron branches.}
\bigskip

Let $R$ be a rooted tree with root $r$. For $u\in V(R)$, let $\mathcal P_r(u)$ be the root--$u$ path. Define the \emph{bottleneck matrix}
\[
M(R)_{uv}
  :=\bigl|V(\mathcal P_r(u))\cap V(\mathcal P_r(v))\bigr|,
\]
and its Perron value \(\beta(R):=\rho(M(R)).\)With our convention of counting common path vertices,
\[
  M(R)=\bigl(L(R)+e_re_r^{\mathsf T}\bigr)^{-1}.
\]
This is the grounded form of the usual inverse formula for a reduced incidence matrix of a tree; see \cite{Bapat2014,AndradeCiardoDahl2022}.

If $v$ is a vertex of an unrooted tree $T$, each branch of $T-v$ is rooted at its vertex adjacent to $v$. A branch of maximum Perron value is called a \emph{Perron branch at $v$}. The characteristic set has the following standard description \cite{KirklandNeumannShader1996,AndradeCiardoDahl2022}.

\begin{theorem}[Perron-branch orientation]\label{thm:perron-orientation}
Let $T$ be a tree and $v\in V(T)$.
\begin{enumerate}[label=\textup{(\roman*)}]
\item If at least two branches at $v$ have maximum Perron value, then $\CharC(T)=\{v\}$.
\item If the maximum Perron value is attained by a unique branch $B$, then
\[
\CharC(T)\subseteq B\cup\{v\},
  \quad
  \CharC(T)\neq\{v\}.
\]
If $v\in\CharC(T)$, then $\CharC(T)=\{v,u\}$, where $u$ is the neighbor of $v$ in $B$.
\item If $\CharC(T)=\{u,v\}$ is a characteristic edge, then the branch at $u$ containing $v$ and the branch at $v$ containing $u$ are the unique Perron branches at $u$ and $v$, respectively.
\end{enumerate}
\end{theorem}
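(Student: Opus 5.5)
We would derive Theorem~\ref{thm:perron-orientation} from the Fiedler-vector description of $\CharC(T)$ together with the grounded-Laplacian identity $M(R)=(L(R)+e_re_r^{\mathsf T})^{-1}$. The first step is the link between $\mu_2(T)$ and branch Perron values. Let $B$ be a branch at $v$ with root $w$, the neighbour of $v$. Then the principal submatrix of $L(T)$ indexed by $V(B)$ equals $L(B)+e_we_w^{\mathsf T}$. Its smallest eigenvalue is therefore $1/\beta(B)$, and the corresponding eigenvector is positive, being the Perron vector of the nonnegative irreducible matrix $M(B)$.

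Next we would compare $\mu_2(T)$ with these values at a fixed $v$. Let $b_1\ge b_2\ge\cdots$ be the Perron values of the branches at $v$. The matrix $L(T)$ with row and column $v$ deleted is the direct sum of the grounded branch Laplacians. Interlacing for this principal submatrix therefore gives
\[
\frac1{b_1}\le\mu_2(T)\le\frac1{b_2}.
\]

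For part (i), suppose $b_1=b_2$. Then $\mu_2(T)=1/b_1$. Take the Perron vectors $y_1,y_2$ of the grounded Laplacians of two maximal branches. Extend them by zero and form a combination $x=\alpha y_1-\gamma y_2$ with row sum zero. A direct check shows $x$ is a Fiedler vector with $x_v=0$, positive on one branch and negative on the other. We then have to show that every Fiedler vector vanishes at $v$. If $x_v\ne0$, the restriction of $x$ to a maximal branch $B$ satisfies $(L(B)+e_we_w^{\mathsf T})x|_B-\mu_2x|_B=x_ve_w$. Taking the inner product with the Perron vector, whose $w$-entry is positive, gives $0=x_v\cdot(\text{positive})$, a contradiction. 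With $x_v=0$ forced and both signs present, Fiedler's sign-pattern description gives $\CharC(T)=\{v\}$.

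For part (ii), the maximum is attained only by $B$. We first show $\mu_2(T)<1/b_2$; we would obtain strictness from an equality analysis like the one in the proof of Lemma~\ref{lem:adjacency-branch}. Suppose instead $\mu_2(T)=1/b_2$. Then there is a Fiedler vector vanishing at $v$ whose restriction to $T-v$ is a $1/b_2$-eigenvector of the direct sum. Since $1/b_2>1/b_1$, this forces support outside $B$ and, more carefully, on the branches attaining $b_2$. The vertex equation at $v$ then gives $\sum_{w\sim v}x_w=0$. This case is the main obstacle. Our plan is to use the Rayleigh quotient strictly: mix in the $B$-Perron vector to produce a vector orthogonal to $\mathbf 1$ with Rayleigh quotient below $1/b_2$.

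Given $\mu_2<1/b_i$ for all $i\neq j$, each grounded Laplacian $L(B_i)+e_{w_i}e_{w_i}^{\mathsf T}-\mu_2I$ is positive definite with nonnegative inverse, since it is an M-matrix. On each such branch, $x|_{B_i}=x_v\,(L(B_i)+e_{w_i}e_{w_i}^{\mathsf T}-\mu_2I)^{-1}e_{w_i}$. So $x$ has the strict sign of $x_v$ on all of $B_i$ when $x_v\neq0$, and vanishes there when $x_v=0$. All sign changes of the Fiedler vector are thus confined to $B\cup\{v\}$. Moreover, $x_v=0$ with $x$ vanishing on all the other branches is compatible with Fiedler's description only if the characteristic set is not $\{v\}$ alone. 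This yields $\CharC(T)\subseteq B\cup\{v\}$ and $\CharC(T)\ne\{v\}$, and the edge statement follows from connectedness. Part (iii) then follows by applying (i) and (ii) at $u$ and at $v$: each must have a unique Perron branch pointing toward the other endpoint of the edge.
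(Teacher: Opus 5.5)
The paper does not prove this theorem; it quotes it as the standard Perron-branch description of Kirkland, Neumann and Shader (see also Andrade--Ciardo--Dahl). Your route is different: you derive it from Fiedler's sign-pattern theorem through the block identity $L(T)[V(T)\setminus\{v\}]=\bigoplus_i\bigl(L(B_i)+e_{w_i}e_{w_i}^{\mathsf T}\bigr)$. This is the Laplacian mirror of the paper's proof of Lemma~\ref{lem:adjacency-branch}. It turns the formal analogy in the remark on two oriented center mechanisms into an actual proof, and it gives the by-product $1/b_1\le\mu_2(T)<1/b_2$. The citation, by contrast, buys brevity and the full weighted-tree theory.

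Your argument is essentially correct, but it needs three repairs.

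\textbf{First, strictness.} The inequality $\mu_2<1/b_2$ is not a real obstacle, and the equality-case detour should go. Its claim that the support is forced outside $B$ is unjustified, since $1/b_2$ could be a non-Perron eigenvalue of the $B$-block. Your Rayleigh plan settles strictness at once. Take $z=\alpha y_1-\gamma y_2\perp\mathbf 1$ with $z_v=0$. Then the Rayleigh quotient of $L(T)$ at $z$ is a convex combination of $1/b_1$ and $1/b_2$, with positive weight on $1/b_1<1/b_2$.

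\textbf{Second, strict sign on the other branches.} You need the inverse of $L(B_i)+e_{w_i}e_{w_i}^{\mathsf T}-\mu_2I$ to be entrywise positive, not merely nonnegative. This holds because the M-matrix is irreducible, and it should be said.

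\textbf{Third, the subcase $x_v=0$.} Here you must invoke the vertex equation at $v$. Since $x$ vanishes on the other branches, it forces $x_w=0$ for the neighbour $w$ of $v$ in $B$. So $v$ has no nonzero neighbour, and Fiedler's theorem places the characteristic vertex in $B$. Without this step, your sentence about compatibility with Fiedler's description is unsupported: a zero at $v$ next to a nonzero $x_w$ would make $v$ characteristic.

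Finally, in (i), the step showing every Fiedler vector vanishes at $v$ is redundant. Once one Fiedler vector has $x_v=0$ and $x_{w_1}\neq0$, Fiedler's theorem already gives $\CharC(T)=\{v\}$.
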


\begin{remark}[Two oriented center mechanisms]
The two preceding branch criteria have the same formal shape. At a vertex $v$, orient toward the unique branch of maximum weight whenever such a branch exists. For adjacency spectral radius, a tie produces the spectral vertex; for bottleneck Perron value, a tie produces the characteristic vertex. If there is no tie, following the orientations leads to the corresponding center edge. For paths, however, the two branch weights have opposite extremal behavior.
\end{remark}

Two elementary facts about $\beta$ will be used repeatedly.

\begin{lemma}[Rooted-subtree monotonicity]\label{lem:rooted-subtree}
If $R$ is a rooted subtree of a rooted tree $R'$ with the same root, then \(\beta(R)\le\beta(R').\) The inequality is strict if the inclusion is proper.
\end{lemma}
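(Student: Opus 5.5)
The plan is to compare the bottleneck matrices entrywise and then apply Perron--Frobenius monotonicity. Let $R$ be a rooted subtree of $R'$ sharing the root $r$, so $V(R)\subseteq V(R')$. For $u,v\in V(R)$, the root--$u$ and root--$v$ paths in $R$ coincide with those in $R'$, because paths in a tree are unique and $R$ is a connected subtree containing $r$. Consequently $M(R)$ is exactly the principal submatrix of $M(R')$ indexed by $V(R)$. Every entry of $M(R')$ is a positive integer, since the root lies on every root path. Hence $M(R')$ is a positive symmetric matrix.

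For the weak inequality, I would use the fact that for a nonnegative matrix the spectral radius of a principal submatrix is at most that of the whole matrix. One way to see this is to pad $M(R)$ with zeros to a $|V(R')|\times|V(R')|$ matrix $N$ satisfying $0\le N\le M(R')$ entrywise, and then use monotonicity of $\rho$ under entrywise domination of nonnegative matrices. Alternatively, since both matrices are symmetric, the Rayleigh quotient gives it directly. Take the Perron vector $x$ of $M(R)$, which is positive, and extend it by zeros to $\tilde x$. Then $\beta(R)=x^{\mathsf T}M(R)x/x^{\mathsf T}x=\tilde x^{\mathsf T}M(R')\tilde x/\tilde x^{\mathsf T}\tilde x\le\beta(R')$.

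For strictness, suppose the inclusion is proper, so that some vertex $w\in V(R')\setminus V(R)$ exists. If equality held in the Rayleigh bound, then $\tilde x$ would be a Perron eigenvector of the positive matrix $M(R')$. By Perron--Frobenius, such an eigenvector has all entries strictly positive, which contradicts $\tilde x_w=0$. Equivalently, $(M(R')\tilde x)_w>0=\beta(R)\tilde x_w$, so $\tilde x$ is not an eigenvector, and equality in the Rayleigh quotient fails for a symmetric matrix. This gives $\beta(R)<\beta(R')$.

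The only point requiring care is the identification of $M(R)$ with a principal submatrix of $M(R')$, which relies on $R$ being connected and containing the root. The remainder is standard Perron--Frobenius theory, so I do not expect a real obstacle. An alternative route, used to cross-check strictness, goes through the identity $M=(L+e_re_r^{\mathsf T})^{-1}$, so that $\beta(R)^{-1}$ is the smallest eigenvalue of the grounded Laplacian. One could compare grounded Laplacians by adding pendant vertices one at a time. The bottleneck-matrix argument above is cleaner, so I would use that one.
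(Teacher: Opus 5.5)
Your proof is correct and follows the paper's argument: $M(R)$ is the principal submatrix of the entrywise positive matrix $M(R')$ indexed by $V(R)$, and Perron--Frobenius monotonicity gives $\beta(R)\le\beta(R')$, with strict inequality when the inclusion is proper. You just make the monotonicity step explicit, using the zero-extended Perron vector in the Rayleigh quotient and the strict positivity of the Perron vector of $M(R')$, which the paper leaves implicit.
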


\begin{proof}
The matrix $M(R)$ is a principal submatrix of $M(R')$. Since all entries of $M(R')$ are positive, Perron--Frobenius monotonicity gives the result, with strict inequality for a proper inclusion.
\end{proof}

Whenever $P_m$ is regarded as a rooted tree, its root is an endpoint. Set
\[
p_m:=\beta(P_m)
  =\frac{1}{2\left(1-\cos\frac{\pi}{2m+1}\right)}
  \qquad(m\ge1),
\]
and put $p_0:=0$.  Indeed,
\[
  M(P_m)^{-1}=L(P_m)+e_1e_1^{\mathsf T},
\]
and a standard sine-vector calculation gives
\[
  \lambda_{\min}\!\left(L(P_m)+e_1e_1^{\mathsf T}\right)
  =2-2\cos\frac{\pi}{2m+1},
\]
which yields the displayed formula for $p_m$.  The sequence $(p_m)$ is strictly increasing.

\begin{lemma}[Rooted paths maximize the Perron value]\label{lem:path-max-perron}
Let $R$ be a rooted tree of order $m$. Then \(\beta(R)\le p_m,\) with equality if and only if $R$ is a path rooted at an endpoint.
\end{lemma}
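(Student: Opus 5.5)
The plan is to compare the bottleneck matrices entrywise. For a rooted tree $R$ of order $m$ with root $r$, we have $M(R)_{uv}=|V(\mathcal P_r(u))\cap V(\mathcal P_r(v))|$. The diagonal entries are depths plus one, $M(R)_{uu}=\operatorname{depth}(u)+1$. The off-diagonal entries satisfy $M(R)_{uv}\le\min\{M(R)_{uu},M(R)_{vv}\}$.

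For the path $P_m$ rooted at an endpoint, with vertices labelled $1,\dots,m$ by distance from the root plus one, the matrix is $M(P_m)_{ij}=\min\{i,j\}$. Its Perron value is $p_m$ by the formula already recorded.

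The key step is to produce a bijection $\sigma\colon V(R)\to\{1,\dots,m\}$ under which $M(R)_{uv}\le\min\{\sigma(u),\sigma(v)\}$ for all $u,v$. Perron--Frobenius monotonicity for nonnegative matrices then gives $\beta(R)\le\rho(M(P_m))=p_m$. We take $\sigma$ to be any breadth-first (or more generally depth-respecting) ordering. List the vertices so that every vertex appears after its parent, and let $\sigma(u)$ be the position of $u$ in the list. Then every ancestor of $u$, including $u$ itself, occupies a distinct position at most $\sigma(u)$. This gives $\operatorname{depth}(u)+1\le\sigma(u)$, and hence
\[
M(R)_{uv}\le\min\{M(R)_{uu},M(R)_{vv}\}\le\min\{\sigma(u),\sigma(v)\}.
\]
So after a simultaneous permutation, $M(R)\le M(P_m)$ entrywise.

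For the equality case we use that $M(P_m)$ is entrywise positive and irreducible. If $0\le A\le B$ with $B$ irreducible and $A\ne B$, then $\rho(A)<\rho(B)$. So equality forces $M(R)_{uu}=\sigma(u)$ for every $u$. In particular there is a vertex of depth $m-1$, which means $R$ is a path rooted at an endpoint. Conversely, if $R$ is such a path, then $M(R)=M(P_m)$ and equality holds.

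The only delicate point is quoting the strict Perron--Frobenius comparison correctly. The hypothesis needed is irreducibility of the larger matrix $M(P_m)$, which holds because all its entries are positive. No condition on the smaller matrix is required, since the comparison is between two nonnegative matrices with $M(R)\le M(P_m)$ entrywise.
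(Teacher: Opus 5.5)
Your proposal is correct and follows essentially the same route as the paper. Both proofs order the vertices so that ancestors precede descendants, dominate $M(R)$ entrywise by $K_m=(\min\{i,j\})$, apply Perron--Frobenius monotonicity, and in the equality case use strict monotonicity to force the diagonal entries to match, which yields a vertex of depth $m-1$ and hence an endpoint-rooted path.
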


\begin{proof}
Order the vertices $v_1,\dots,v_m$ so that every ancestor precedes each of its descendants. Then the root path to $v_i$ contains at most $i$ vertices, and hence
\(
  M(R)_{ij}\le\min\{i,j\}.
\)
Therefore, entrywise,
\[
0<M(R)\le K_m,
  \quad
  K_m:=\bigl(\min\{i,j\}\bigr)_{i,j=1}^m.
\]
The matrix $K_m$ is the bottleneck matrix of the endpoint-rooted path $P_m$. Perron--Frobenius monotonicity gives $\beta(R)\le p_m$. Equality forces $M(R)=K_m$, and then the diagonal entries show that the vertices have depths $0,1,\dots,m-1$; hence the ancestor relation is a single chain. Thus $R$ is an endpoint-rooted path.
\end{proof}

On the adjacency side we use the classical theorem of Collatz and Sinogowitz.

\begin{theorem}[Collatz--Sinogowitz]\label{thm:path-min-adj}
If $G$ is a connected graph of order $m$, then
\[
\rho(A(G))\ge 2\cos\frac{\pi}{m+1},\]
with equality if and only if $G\cong P_m$.
\end{theorem}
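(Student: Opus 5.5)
The plan is to reduce from connected graphs to trees, and then from trees to the path by a sequence of spectral-radius-decreasing moves.

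\emph{Reduction to trees.} Let $G$ be connected of order $m$ and fix a spanning tree $T\subseteq G$. Then $A(T)\le A(G)$ entrywise, and $A(G)$ is irreducible. Perron--Frobenius monotonicity gives $\rho(A(G))\ge\rho(A(T))$, with strict inequality whenever $G\ne T$. It therefore suffices to prove the following: every tree $T$ of order $m$ satisfies $\rho(A(T))\ge\rho(A(P_m))=2\cos\frac{\pi}{m+1}$, with equality only for $T\cong P_m$. The value $\rho(A(P_m))=2\cos\frac{\pi}{m+1}$ comes from the usual sine eigenvectors $x_j=\sin\frac{j\pi}{m+1}$.

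\emph{Grafting step.} For a graph $H$ with a vertex $v$, let $H_{k,l}$ denote $H$ with two pendant paths of lengths $k$ and $l$ attached at $v$. The key lemma, due to Li and Feng, is
\[
\rho\bigl(A(H_{k,l})\bigr)>\rho\bigl(A(H_{k+1,l-1})\bigr)\qquad(k\ge l\ge1),
\]
provided $v$ has a neighbour in $H$. If $T$ is not a path, choose a vertex $v$ of degree at least $3$ that is farthest from some leaf. This places us in the situation above: at least two pendant paths hang at $v$, and $v$ has a further neighbour. Repeatedly moving one vertex from the shorter pendant path to the longer one keeps the order fixed and strictly decreases $\rho$. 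It also strictly decreases the number of leaves whenever $l=1$, so the process terminates at $P_m$. This yields $\rho(A(T))>\rho(A(P_m))$ for every tree $T\not\cong P_m$, which gives both the inequality and the equality case.

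\emph{Proving the grafting lemma (main obstacle).} I would work with characteristic polynomials $\varphi(\cdot,x)$ and use the pendant-edge recursion $\varphi(G)=x\varphi(G-w)-\varphi(G-w-w')$ for a leaf $w$ with neighbour $w'$. Expanding at $v$ writes $\varphi(H_{k,l})$ as a combination of $\varphi(H)$ and $\varphi(H-v)$ with coefficients that are products $\varphi(P_a)\varphi(P_b)$. Subtracting the corresponding expansion for $H_{k+1,l-1}$, the difference should collapse, up to sign, to
\[
\varphi(H-v)\cdot\bigl(\varphi(P_k)\varphi(P_{l-1})-\varphi(P_{k+1})\varphi(P_{l-2})\bigr),
\]
or a similar expression. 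Substituting $x=2\cosh t$, so that $\varphi(P_n)=\sinh((n+1)t)/\sinh t$, the product-to-sum identities turn the bracket into a single positive multiple of $\varphi(P_{k-l+1})$, possibly up to a constant shift in the index.

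Evaluating the difference at $x_0=\rho(A(H_{k+1,l-1}))$, everything has a definite sign. First, $x_0>2$ because $H_{k+1,l-1}$ contains a vertex of degree at least $3$, so $x_0>\rho(A(P_n))$ for every $n$ and hence $\varphi(P_n,x_0)>0$. Second, $\varphi(H-v,x_0)>0$ because $x_0$ exceeds the spectral radius of the proper subgraph $H-v$. Together these give $\varphi(H_{k,l},x_0)<0$. Since a characteristic polynomial is positive beyond its largest root, this forces $\rho(A(H_{k,l}))>x_0$.

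The delicate part is keeping track of the exact signs and index shifts in this polynomial identity. If it becomes messy, I would fall back on a direct eigenvector argument. Take the Perron vector of $H_{k+1,l-1}$, which is positive and unimodal along each pendant path. Then exhibit a test vector on $H_{k,l}$, obtained by reflecting the shorter path's entries, whose Rayleigh quotient is at least $x_0$, with strictness coming from the fact that this test vector cannot itself be an eigenvector.
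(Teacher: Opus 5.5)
The paper does not prove this statement. It quotes the Collatz--Sinogowitz theorem as a classical result and uses it only in Lemma~\ref{lem:two-branch}, so your proposal is an independent proof. Its architecture is the standard one and is sound. The spanning-tree reduction via strict Perron--Frobenius monotonicity is correct. The Li--Feng induction is also correct, once your choice of $v$ is read as: fix a leaf $w$ and take a vertex of degree at least $3$ at maximum distance from $w$. Then every branch at $v$ avoiding $w$ is a pendant path, there are at least two of them, and the branch containing $w$ supplies the extra neighbour. If you simply cite Li--Feng, nothing more is needed.

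Your self-contained proof of the grafting lemma, however, fails exactly where you expected. The difference does not collapse to a multiple of $\varphi(H-v)$. Expanding at the two cut edges at $v$ gives
\[
\varphi(H_{k,l})=\varphi(H)\varphi(P_k)\varphi(P_l)-\varphi(H-v)\bigl(\varphi(P_{k-1})\varphi(P_l)+\varphi(P_k)\varphi(P_{l-1})\bigr).
\]
Your $\sinh$ computation yields $\varphi(P_a)\varphi(P_b)-\varphi(P_{a+1})\varphi(P_{b-1})=\varphi(P_{a-b})$, with $\varphi(P_{-1})=0$. Combined with $\varphi(P_{j+1})+\varphi(P_{j-1})=x\varphi(P_j)$, this gives
\[
\varphi(H_{k,l})-\varphi(H_{k+1,l-1})=\varphi(P_{k-l})\bigl(\varphi(H)-x\,\varphi(H-v)\bigr).
\]
Your form cannot be right. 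For $H=K_1$ both graphs are $P_{k+l+1}$, so the difference is $x-x=0$, whereas $\pm\varphi(H-v)\varphi(P_{k-l+1})\neq0$.

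Consequently, positivity of $\varphi(H-v,x_0)$ and of path polynomials does not determine the sign. What you need is $\varphi(H,x_0)<x_0\varphi(H-v,x_0)$. For a tree $H$, which is all you need after the reduction, this follows from $\varphi(H)=x\varphi(H-v)-\sum_{u\sim v}\varphi(H-u-v)$. Each $H-u-v$ is empty or a proper subgraph of the connected graph $H_{k+1,l-1}$, so each term is positive at $x_0$. The sum is nonempty precisely because $v$ has a neighbour in $H$. This is where that hypothesis is used, and it never enters your sketch. With this, $\varphi(H_{k,l},x_0)<0$ as desired.

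Two smaller points. First, $x_0>2$ is false in general. At a last move with $l=1$ and $\deg_H v=1$, the graph $H_{k+1,0}$ can be a path (start from a spider with three legs), so "$\varphi(P_n,x_0)>0$ for every $n$" fails. What you actually need is $\varphi(P_{k-l},x_0)>0$, which holds because $P_{k-l}$ is empty or a proper subgraph of $H_{k+1,l-1}$. Second, the eigenvector fallback is too vague to carry the argument as stated.
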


\section{Pathification}\label{sec:pathification}

Let $T$ be a tree with \(d:=\dist_T\bigl(\SpecC(T),\CharC(T)\bigr)>0.\) Choose $s\in\SpecC(T)$ and $c\in\CharC(T)$ such that $d_T(s,c)=d$, and write the $s$--$c$ path as
\[s=x_0,x_1,\dots,x_d=c.\] Let $B$ be the component of $T-s$ containing $c$, put $m:=|V(B)|$, and let \(H:=T[V(T)\setminus V(B)],\) rooted at $s$.

For $j\ge1$, define $H[j]$ by prepending $j-1$ new vertices to the root of $H$, with the new outer endpoint as root. Thus $H[1]=H$ and
\( |V(H[j])|=|V(H)|+j-1.\)

The \emph{pathification} of $T$ at $(s,c)$ is the tree $\widehat T$ obtained by deleting $B$ and attaching to $s$ a new path \[s-y_1-y_2-\cdots-y_m.\] The order of the tree is unchanged.

\begin{figure}[H]
\centering

\begin{tikzpicture}[
  scale=0.90,
  transform shape,
  v/.style={circle,fill=black,inner sep=1.5pt},
  box/.style={
    draw,
    rounded corners=2pt,
    minimum width=1.4cm,
    minimum height=0.7cm
  },
  every node/.style={font=\small}
]

\node[box] (H) at (0,0) {$H-s$};

\node[v] (s) at (1.8,0) {};
\node at (1.8,-0.38) {$s=x_0$};

\draw (H.east)--(s);

\node[v] (x1) at (3.0,0) {};
\node at (3.0,-0.38) {$x_1$};

\node at (4.0,0) {$\cdots$};

\node[v] (xdm) at (5.0,0) {};
\node at (5.0,-0.38) {$x_{d-1}$};

\node[v] (c) at (6.3,0) {};
\node at (6.3,-0.38) {$c=x_d$};

\draw (s)--(x1);
\draw (x1)--(3.55,0);
\draw (4.45,0)--(xdm);
\draw (xdm)--(c);

\node[v] (q1) at (5.0,0.90) {};
\node[v] (q2) at (5.60,1.35) {};
\draw (xdm)--(q1)--(q2);

\node[box] (F) at (8.0,0) {$F$};
\draw (c)--(F.west);

\node at (5.1,1.80) {$B,\qquad |V(B)|=m$};

\node at (4.0,-1.10) {$(a)\qquad T$};

\end{tikzpicture}

\vspace{0.25cm}

\noindent\makebox[\linewidth][c]{
  $\Downarrow\qquad\text{\emph{pathification}}$}

\vspace{0.10cm}

\begin{tikzpicture}[
   scale=0.90,
  transform shape,
  v/.style={circle,fill=black,inner sep=1.5pt},
  box/.style={
    draw,
    rounded corners=2pt,
    minimum width=1.4cm,
    minimum height=0.7cm
  },
  every node/.style={font=\small}
]

\node[box] (H2) at (0,0) {$H-s$};

\node[v] (s2) at (1.8,0) {};
\node at (1.8,-0.38) {$s$};

\draw (H2.east)--(s2);

\node[v] (y1) at (3.0,0) {};
\node at (3.0,-0.38) {$y_1$};

\node at (4.0,0) {$\cdots$};

\node[v] (yd) at (5.0,0) {};
\node at (5.0,0.42) {$y_d$};

\node at (6.1,0) {$\cdots$};

\node[v] (ym) at (7.4,0) {};
\node at (7.4,-0.38) {$y_m$};

\draw (s2)--(y1);
\draw (y1)--(3.55,0);
\draw (4.45,0)--(yd);
\draw (yd)--(5.55,0);
\draw (6.65,0)--(ym);

\node at (5.15,0.95) {new branch $P_m$};

\node[align=center] at (3.55,-1.15)
  {backward branch\\[-1pt]$H[d]$};

\node[align=center] at (6.65,-1.15)
  {forward branch\\[-1pt]$P_{m-d}$};

\node at (4.7,-2.00) {$(b)\qquad \widehat T$};

\end{tikzpicture}

\caption{
Schematic pathification at a closest pair $s\in\SpecC(T)$ and $c\in\CharC(T)$. The component $B$ of $T-s$ containing $c$ is replaced by an endpoint-rooted path $P_m$ of the same order, while the $H$-side is left unchanged. At $y_d$, the branch toward $s$ is $H[d]$, whereas the branch away from $s$ is $P_{m-d}$.}
\label{fig:pathification}
\end{figure}

The construction is shown in \cref{fig:pathification}. 

\begin{lemma}[Two branch inequalities]\label{lem:two-branch}
With the notation above,
\[
\rhoA(H-s)\ge 2\cos\frac{\pi}{m+1},
\]
and \(\beta(H[d])\le p_{m-d}.\) In particular, $m\ge d+1$.
\end{lemma}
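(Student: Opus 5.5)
The plan is to derive both inequalities from the branch criteria applied at the two ends $s$ and $c$ of the chosen shortest path, using the minimality of $d$ to rule out the ``unique maximal branch'' alternative at each end. Throughout, $d>0$, so $\SpecC(T)\cap\CharC(T)=\varnothing$. The point used twice is this: if a center set contains a vertex that is one step closer to the other center set, then $d$ was not the distance.

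\emph{Adjacency side.} The branches of $T$ at $s$ are $B$ and the components of $H-s$. Suppose $B$ were the unique branch at $s$ of maximum adjacency spectral radius.
\begin{itemize}
\item By Lemma~\ref{lem:adjacency-branch}(ii), since $s\in\SpecC(T)$, we would get $\SpecC(T)=\{s,x_1\}$, because $x_1$ is the neighbor of $s$ in $B$.
\item Then $\dist_T(\SpecC(T),\CharC(T))\le d_T(x_1,c)=d-1$, a contradiction.
\end{itemize}
If the maximum is attained by $B$ together with another branch, or not by $B$ at all, then some component $K$ of $H-s$ satisfies $\rho(A(K))\ge\rho(A(B))$. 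Since $B$ is connected of order $m$, Theorem~\ref{thm:path-min-adj} gives $\rho(A(B))\ge 2\cos\frac{\pi}{m+1}$. Hence $\rhoA(H-s)\ge 2\cos\frac{\pi}{m+1}$.

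\emph{Perron side.} Work at $c=x_d$. Let $D$ be the branch of $T$ at $c$ containing $s$, rooted at $x_{d-1}$.
\begin{itemize}
\item $D$ contains the path $x_{d-1},\dots,x_1,s$ followed by all of $H$.
\item This subgraph is a copy of $H[d]$ rooted at $x_{d-1}$, i.e.\ a rooted subtree of $D$ with the same root.
\item Lemma~\ref{lem:rooted-subtree} therefore gives $\beta(H[d])\le\beta(D)$.
\end{itemize}
Suppose $D$ were the unique Perron branch at $c$; this includes the case that $c$ is a leaf of $T$, where $D$ is the only branch. By Theorem~\ref{thm:perron-orientation}(ii), since $c\in\CharC(T)$, we would get $\CharC(T)=\{c,x_{d-1}\}$. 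Then the distance to $\SpecC(T)$ would be at most $d-1$, again a contradiction; when $d=1$ we have $x_{d-1}=s\in\SpecC(T)$, so the distance would be $0$. Hence there is a branch $D'\neq D$ at $c$ with $\beta(D')\ge\beta(D)$.

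Every branch at $c$ other than $D$ lies in $B\setminus\{x_1,\dots,x_d\}$. So $1\le |V(D')|\le m-d$. Lemma~\ref{lem:path-max-perron} and the monotonicity of $(p_j)$ then give
\[
\beta(H[d])\le\beta(D)\le\beta(D')\le p_{|V(D')|}\le p_{m-d}.
\]
The existence of the nonempty branch $D'$ inside $B\setminus\{x_1,\dots,x_d\}$ also yields $m\ge d+1$.

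The only delicate step is the orientation argument at each end: justifying that the unique-maximum alternative forces the center to contain the neighbor on the path. This is exactly the final clause of Lemma~\ref{lem:adjacency-branch}(ii) and of Theorem~\ref{thm:perron-orientation}(ii). It also requires treating the degenerate case where $c$ is a leaf, which is handled above together with the unique-maximum case.
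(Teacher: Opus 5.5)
Your proof is correct and follows essentially the same route as the paper. You apply the branch criteria at $s$ and at $c$, use the minimality of $d$ to rule out orientation toward $x_1$ (resp.\ $x_{d-1}$), and then invoke Collatz--Sinogowitz and the rooted-path maximality of $\beta$ on a branch inside $B\setminus\{x_1,\dots,x_d\}$. The only difference is organizational: you argue by contradiction on ``$B$ (resp.\ $D$) is the unique maximal branch'' instead of splitting into vertex/edge cases for $\SpecC(T)$ and $\CharC(T)$, and this version covers the degenerate leaf case cleanly.
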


\begin{proof}
Consider first the branches at $s$. If $\SpecC(T)=\{s\}$, then by \cref{lem:adjacency-branch} the maximum adjacency branch weight at $s$ is attained at least twice. Hence some branch contained in $H-s$ has adjacency spectral radius at least $\rho(A(B))$.

If $\SpecC(T)$ is an edge and $s$ is its endpoint nearest to $c$, then the other endpoint lies in $H-s$. The branch containing that endpoint is the unique maximum adjacency branch at $s$, and therefore has spectral radius strictly larger than $\rho(A(B))$. Thus in all cases
\[
\rhoA(H-s)\ge\rho(A(B)).\]
By \cref{thm:path-min-adj},
\[
  \rho(A(B))\ge 2\cos\frac{\pi}{m+1},
\]
This proves the first inequality.

Now let $G_-$ be the branch at $c$ containing $s$. The rooted tree $H[d]$ is a rooted subtree of $G_-$: it contains the stem $x_{d-1},\dots,x_1,s$ together with all of $H$, while $G_-$ may have additional subtrees attached to the internal vertices of the $s$--$c$ path. Hence \(\beta(H[d])\le\beta(G_-)\) by \cref{lem:rooted-subtree}.

If $\CharC(T)=\{c\}$, then there is a branch $F\ne G_-$ at $c$ with $\beta(F)\ge\beta(G_-)$. If $\CharC(T)$ is an edge and $c$ is its endpoint nearest to $s$, then the branch containing the other endpoint of the characteristic edge is the unique Perron branch at $c$, so again there is a branch $F\ne G_-$ with $\beta(F)>\beta(G_-)$. In either case,
\[
\beta(H[d])\le\beta(F).
\]
The branch $F$ lies in $B\setminus\{x_1,\dots,x_d\}$, and therefore $|V(F)|\le m-d$. By \cref{lem:path-max-perron},
\[
  \beta(F)\le p_{|V(F)|}\le p_{m-d}.
\]
This proves the second inequality.  Since $\beta(H[d])>0$, necessarily $m-d\ge1$.
\end{proof}

\begin{theorem}[Pathification]\label{thm:pathification}
The pathification $\widehat T$ satisfies
\[
  \dist_{\widehat T}\bigl(\SpecC(\widehat T),\CharC(\widehat T)\bigr)
  \ge
  \dist_T\bigl(\SpecC(T),\CharC(T)\bigr).
\]
\end{theorem}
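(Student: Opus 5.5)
Set $\widehat d:=\dist_{\widehat T}(\SpecC(\widehat T),\CharC(\widehat T))$. The plan is to locate each center in $\widehat T$ by running the two branch criteria along the new path $s,y_1,\dots,y_m$, and then to show that $\SpecC(\widehat T)$ lies on the $H$-side of $s$ (including $s$), while $\CharC(\widehat T)$ lies in $\{y_d,\dots,y_m\}$. Once this is done, every vertex of $\SpecC(\widehat T)$ is at distance at least $d$ from every vertex of $\CharC(\widehat T)$, because any path between them passes through $s$ and then reaches $y_j$ with $j\ge d$. Hence $\widehat d\ge d$.

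\emph{Adjacency side.} At $s$ in $\widehat T$, the branches are those of $H-s$ together with the new branch $P_m$, and $\rho(A(P_m))=2\cos\frac{\pi}{m+1}$. By the first inequality of \cref{lem:two-branch}, some branch inside $H-s$ has radius at least $\rho(A(P_m))$. There are two cases.
\begin{enumerate}[label=\textup{(\alph*)}]
\item If that branch ties with $P_m$ or beats it, the maximum is either attained at least twice or uniquely attained inside $H-s$.
\item In either situation, \cref{lem:adjacency-branch} gives $\SpecC(\widehat T)\subseteq V(H)$: case (i) yields $\{s\}$, and case (ii) places the center inside the maximal branch, which is contained in $H-s$, together with $s$.
\end{enumerate}
The one subtlety is a tie where $P_m$ is one of the two maximal branches. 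This still gives $\{s\}$, which is fine.

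\emph{Perron side.} At $y_d$ in $\widehat T$, there are exactly two branches: $H[d]$ toward $s$ (rooted at $y_{d-1}$, or at $s$ when $d=1$) and $P_{m-d}$ away from $s$. By the second inequality of \cref{lem:two-branch}, $\beta(H[d])\le p_{m-d}=\beta(P_{m-d})$. If equality holds, \cref{thm:perron-orientation}(i) gives $\CharC(\widehat T)=\{y_d\}$. If the inequality is strict, \cref{thm:perron-orientation}(ii) gives $\CharC(\widehat T)\subseteq\{y_d,\dots,y_m\}$. In both cases the characteristic set lies at distance at least $d$ from $s$.

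The main obstacle is bookkeeping rather than analysis. First, one must confirm that the branch at $y_d$ toward $s$ is exactly $H[d]$ as a rooted tree. It consists of the stem $y_{d-1},\dots,y_1$ prepended to $H$, rooted at the outer endpoint $y_{d-1}$, which matches the definition of $H[d]$ with $d-1$ new vertices. Second, one must check that $m\ge d+1$, which is guaranteed by \cref{lem:two-branch}, so that $y_d$ exists and $P_{m-d}$ is nonempty. Third, in the adjacency step, one must not need strictness: the non-strict inequality already suffices, because any tie sends the spectral center to $s$. Combining the two localizations gives $\widehat d\ge d$.
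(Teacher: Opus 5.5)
Your proposal is correct and follows essentially the same argument as the paper: the first inequality of \cref{lem:two-branch} rules out $P_m$ as the unique maximal adjacency branch at $s$, so \cref{lem:adjacency-branch} gives $\SpecC(\widehat T)\subseteq V(H)$, while the second inequality applied to the two branches $H[d]$ and $P_{m-d}$ at $y_d$ gives $\CharC(\widehat T)\subseteq\{y_d,\dots,y_m\}$ via \cref{thm:perron-orientation}. The distance bound then follows exactly as you say, since every path from $V(H)$ to $\{y_d,\dots,y_m\}$ passes through $s$ and $d_{\widehat T}(s,y_d)=d$.
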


\begin{proof}
By \cref{lem:two-branch}, \(\rho_A(H-s)\ge \rho(A(P_m)).\) Hence the new path branch at $s$ is not the unique branch of maximum adjacency weight. If it ties for the maximum, \cref{lem:adjacency-branch} gives \(\mathcal S(\widehat T)=\{s\}\); otherwise every maximum branch lies on the $H$-side. In either case,
\[
\mathcal S(\widehat T)\subseteq V(H).
\]
At $y_d$, the branch toward $s$ is precisely $H[d]$, whereas the forward branch is the endpoint-rooted path $P_{m-d}$. By \cref{lem:two-branch},
\[
\beta(H[d])\le p_{m-d}=\beta(P_{m-d}).
\]
If equality holds, the two branches at $y_d$ are both Perron branches, and \cref{thm:perron-orientation} gives \(\mathcal C(\widehat T)=\{y_d\}\). If the inequality is strict, the forward branch is the unique Perron branch at $y_d$, so \cref{thm:perron-orientation} places the characteristic set on the forward side of $y_d$. Thus in either case
\[
\mathcal C(\widehat T)
   \subseteq \{y_d,y_{d+1},\ldots,y_m\}.
\]
Every path from a vertex of $H$ to \(\{y_d,y_{d+1},\ldots,y_m\}\) passes through $s$, and \(d_{\widehat T}(s,y_d)=d.\) Together with the two inclusions above, this yields
\[
\operatorname{dist}_{\widehat T}
\bigl(\mathcal S(\widehat T),\mathcal C(\widehat T)\bigr)
   \ge d =
\operatorname{dist}_{T}
   \bigl(\mathcal S(T),\mathcal C(T)\bigr).
\]
\end{proof}

\begin{remark}
Pathification is necessarily one-sided.  Replacing the branch on the $\CharC$-side by a path lowers its adjacency spectral radius, so the adjacency center cannot move into the replacement; at the same time the path maximizes Perron value, so the characteristic set cannot move back across $y_d$. In short, paths are light for adjacency and heavy for Perron value. The separation theorem below is the quantitative form of the same observation.
\end{remark}

\section{The separation theorem}\label{sec:separation}
\cref{lem:two-branch} reduces the sharp bound to comparing the Perron value of $H[d]$ with that of rooted brooms of the same stem length.

\subsection{Two rooted-broom estimates}
For integers $a,\ell\ge1$, let $W(a,\ell)$ be the rooted tree obtained from the path
\[
u_0-u_1-\cdots-u_{a-1},\]
rooted at $u_0$, by adjoining $\ell$ leaves to $u_{a-1}$.

An outward graft strictly increases the Perron value.

\begin{lemma}[Outward grafting]\label{lem:outward-grafting-short}
Let $R$ be a rooted tree. Suppose a nonempty pendant subtree $X$ is attached at a vertex $u$, and let $v$ be a proper descendant of $u$ that does not belong to $X$. If $X$ is detached from $u$ and reattached at $v$, with the root unchanged, then the Perron value strictly increases.
\end{lemma}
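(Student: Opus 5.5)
The plan is to compare the two bottleneck matrices entrywise and then use Perron--Frobenius with a positive Perron vector. Let $R$ be the original rooted tree and $R'$ the tree after moving $X$ from $u$ to $v$; the vertex set is unchanged.

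\textbf{Entrywise comparison.} For $a,b\in V(R)$, $M(R)_{ab}$ counts the common vertices of the two root paths. Pairs with neither vertex in $X$ are unaffected, since their root paths avoid $X$. The same holds for pairs with both vertices in $X$, since their common part inside $X$ is unchanged.

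Now take $a\in X$ and $b\notin X$. In $R$ the root path to $a$ is $\mathcal P_r(u)$ followed by the path inside $X$. In $R'$ it is $\mathcal P_r(v)$ followed by the same path inside $X$. Since $v$ is a proper descendant of $u$, we have $\mathcal P_r(u)\subsetneq\mathcal P_r(v)$. Hence
\[
M(R')_{ab}=|\mathcal P_r(v)\cap\mathcal P_r(b)|\ge|\mathcal P_r(u)\cap\mathcal P_r(b)|=M(R)_{ab}.
\]
Thus $M(R')\ge M(R)$ entrywise.

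\textbf{A strictly increased entry.} Take $b=v$, which is allowed because $v\notin X$. Then $M(R')_{av}=|\mathcal P_r(v)|$, which is strictly larger than $|\mathcal P_r(u)|=M(R)_{av}$. So at least one entry strictly increases.

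\textbf{Strict Perron inequality.} Both matrices are positive, and $M(R')$ is symmetric. Let $x>0$ be the Perron vector of $M(R)$. Then
\[
\beta(R')\ge\frac{x^{\mathsf T}M(R')x}{x^{\mathsf T}x}>\frac{x^{\mathsf T}M(R)x}{x^{\mathsf T}x}=\beta(R).
\]
The strict inequality holds because $x$ is entrywise positive and $M(R')-M(R)$ is a nonzero nonnegative matrix.

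\textbf{Main point to check.} The only step needing care is that the new root paths really are concatenations as described, so that the $X$-internal parts cancel and the comparison reduces to $\mathcal P_r(u)\subsetneq\mathcal P_r(v)$. This holds because $X$ hangs off its attachment vertex, and $v\notin X$ rules out creating a cycle.
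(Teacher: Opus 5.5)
Your proof is correct and follows the paper's argument: you show entrywise domination $M(R')\ge M(R)$ with at least one strict entry, then apply strict Perron--Frobenius monotonicity, which you check directly via the Rayleigh quotient at the positive Perron vector of $M(R)$. One slip: entries with both indices in $X$ are not unchanged. They strictly increase, by $|\mathcal P_r(v)|-|\mathcal P_r(u)|$, because the shared segment outside $X$ grows from $\mathcal P_r(u)$ to $\mathcal P_r(v)$. This only strengthens the domination, so your conclusion stands.
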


\begin{proof}
After identifying the vertex sets, every pair of root paths has a common initial segment at least as long as before, while the root paths to vertices of $X$ become strictly longer. Hence the new bottleneck matrix dominates the old one entrywise and differs from it in at least one entry. Strict Perron--Frobenius monotonicity gives the claim.
\end{proof}

\begin{lemma}\label{lem:broom-benchmarks}
The following hold:
\[
\beta(W(a,4))>p_{a+3}\qquad(a\ge3),
\]
and
\[
\beta(W(a,5))>p_{a+3}\qquad(a\ge2).
\]
\end{lemma}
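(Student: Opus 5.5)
We propose to work with the grounded Laplacian rather than with $M$ itself. Since $M(R)=\bigl(L(R)+e_re_r^{\mathsf T}\bigr)^{-1}$, we have $\beta(R)=1/\lambda_{\min}\bigl(L(R)+e_re_r^{\mathsf T}\bigr)$. Put $\mu^*:=2-2\cos\frac{\pi}{2a+7}=1/p_{a+3}$. Each claim then becomes
\[
\lambda_{\min}\bigl(L(W(a,\ell))+e_{u_0}e_{u_0}^{\mathsf T}\bigr)<\mu^*\qquad(\ell=4,\ a\ge3;\ \ \ell=5,\ a\ge2).
\]
By Sylvester's law of inertia, this holds if and only if the symmetric Gaussian elimination of $N_\ell:=L(W(a,\ell))+e_{u_0}e_{u_0}^{\mathsf T}-\mu^*I$ produces a negative pivot. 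We eliminate from the leaves toward the root.

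\emph{Leaves.} Eliminating the $\ell$ leaves first gives pivots $1-\mu^*>0$.

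\emph{First stem vertex.} The pivot at $u_{a-1}$ is then
\[
d_{a-1}^{(\ell)}=1+\ell-\mu^*-\frac{\ell}{1-\mu^*}.
\]

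\emph{Remaining stem.} Moving up the stem, each new pivot is obtained from the previous one by the Möbius step
\[
d_k=2-\mu-\frac{1}{d_{k+1}}\quad(1\le k\le a-2),
\qquad
d_0=1+1-\mu-\frac{1}{d_1},
\]
where at $u_0$ the ground term $e_{u_0}e_{u_0}^{\mathsf T}$ contributes the extra $1$ to the degree.

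\emph{Comparison with the path.} For the endpoint-rooted path $P_{a+3}$ we run the same elimination on the chain of vertices $u_0,\dots,u_{a+2}$. The vertices $u_0,\dots,u_{a-1}$ carry exactly the same recursion; the only difference is the starting pivot at $u_{a-1}$. There it is $e_{a-1}$, the value produced by eliminating a pendant chain of three vertices below $u_{a-1}$. Because $\mu^*$ is an eigenvalue of the grounded path, and it is the smallest one, all path pivots are positive except the last, $e_0=0$. This can also be checked directly from the sine solution $x_k=\sin((k+1)\theta)$ with $\theta=\pi/(2a+7)$: the ratios $x_{k}/x_{k-1}$ are the reciprocal pivots.

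The map $t\mapsto 2-\mu-1/t$ is strictly increasing on $t>0$. Hence if $d_{a-1}^{(\ell)}<e_{a-1}$, propagation keeps $d_k<e_k$ at every step. So either some intermediate broom pivot is already $\le0$, or we reach $d_0<e_0=0$. Either way $N_\ell$ is not positive semidefinite, hence $\lambda_{\min}<\mu^*$. (A zero intermediate pivot also suffices, by a continuity/perturbation argument or by interlacing.) The claim therefore reduces to the purely local inequality
\[
1+\ell-\mu-\frac{\ell}{1-\mu}<e(\mu):=2-\mu-\cfrac{1}{2-\mu-\cfrac{1}{1-\mu}},
\]
evaluated at $\mu=\mu^*$. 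Here $e(\mu)$ is the pivot at a vertex with a single pendant chain of three vertices.

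\emph{Heuristic check.} To first order in $\mu$, the left side is $1-(\ell+1)\mu$ and the right side is $1-4\mu$. Each side is essentially $1-(\text{number of descendants}+1)\mu$. So for small $\mu$ the inequality holds because $\ell+1>4$; this is the quantitative content of ``more mass hangs below $u_{a-1}$ in the broom''. The first-order margin is $\mu$ when $\ell=4$ and $2\mu$ when $\ell=5$. The higher-order terms go the wrong way for large $\mu$, and this is where the thresholds on $a$ come from.

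\emph{Main obstacle.} We expect the hardest step to be verifying the rational inequality exactly on the needed range. This requires clearing denominators (all positive for $\mu<\tfrac12$, say) to get a polynomial inequality $q_\ell(\mu)>0$. One then checks that $q_\ell$ is positive on $(0,\mu_{\max}]$, with $\mu_{\max}=2-2\cos\frac{\pi}{13}$ for $\ell=4$ and $\mu_{\max}=2-2\cos\frac{\pi}{11}$ for $\ell=5$. The fact that $\mu^*$ decreases in $a$ handles all larger $a$ at once. For that check we would locate the positive roots of $q_\ell$ explicitly, or bound them using $2-2\cos x<x^2$.
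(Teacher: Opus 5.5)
Your elimination scheme is sound. It is a dual mechanism to the paper's, and it reaches literally the same scalar inequality. The paper puts $x_j=\sin((j+1)\theta)/\sin\theta$ on the stem and $x_{a-1}/(1-\sigma)$ on the leaves. Then $(\mathcal L(R)-\sigma I)x$ vanishes except at $u_{a-1}$, and pairing with a positive eigenvector of $\mathcal L(R)$ gives $\operatorname{sgn}(\tau-\sigma)=\operatorname{sgn}(\mathcal R)$. The path pivots are $e_k=\sin(k\theta)/\sin((k+1)\theta)$, so the paper's residual is exactly $\mathcal R/x_{a-1}=d^{(\ell)}_{a-1}-e_{a-1}$. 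In other words, your upward Sylvester/M\"obius propagation mirrors the paper's downward propagation of the sine solution from the root. The paper's version gives a two-sided criterion at once and needs no discussion of zero or negative intermediate pivots. Yours localizes everything in one pivot, which is what makes your descendant-count heuristic visible.

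On your zero-pivot aside: interlacing alone gives only $\lambda_{\min}\le\mu^*$. For strictness, take a null vector $v$ of the singular block hanging from $u_k$. Then $v_{u_k}\ne0$, so $v^{\mathsf T}N_\ell v=0$ while $(N_\ell v)_{u_{k-1}}=-v_{u_k}\neq0$. This is impossible if $N_\ell$ is positive semidefinite.

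As written, however, the proof is incomplete, and the key display is off by one level. Your $e(\mu)$ has two nested fractions. That is the pivot at $u_a$, with expansion $1-3\mu+O(\mu^2)$, which contradicts your own heuristic. The pivot at $u_{a-1}$ needs three nested fractions and equals $s_4/s_3$, where
\[ s_3=1-6\mu+5\mu^2-\mu^3,\qquad s_4=1-10\mu+15\mu^2-7\mu^3+\mu^4. \]
With your display you would in effect be comparing with $P_{a+2}$. Its root pivot at $\mu^*$ is positive, so the argument would conclude nothing. Also, the denominators are not all positive for $\mu<\tfrac12$: $s_3$ vanishes at $2-2\cos(\pi/7)\approx0.198$. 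This is harmless here, since $\mu^*<0.082$.

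More importantly, the step you call the main obstacle is the entire quantitative content of the lemma, and it must actually be carried out. It is short. Multiply by $(1-\mu)s_3>0$ and divide by $\mu$; then $d^{(\ell)}_{a-1}<e_{a-1}$ becomes
\[ (\ell-3)+(7-6\ell)\mu+5(\ell-1)\mu^2-(\ell-1)\mu^3>0. \]
This is the paper's condition $\ell>G_3(z)$ in the variable $\mu=2-z$; indeed $s_3=z^3-z^2-2z+1$.

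For $\ell=4$ the left side is $1-17\mu+15\mu^2-3\mu^3$. It is decreasing on $[0,0.1]$ and has its root near $0.0622$. That root lies between $2-2\cos(\pi/13)\approx0.0581$ and $2-2\cos(\pi/11)\approx0.0810$. So the margin at $a=3$ is thin, and the $\ell=4$ inequality genuinely fails at $a=2$. The bound $\mu^*<(\pi/13)^2<484/8281$ suffices.

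For $\ell=5$ the cubic is $2-23\mu+20\mu^2-4\mu^3$, with its root near $0.0946$. Here $\mu^*<(\pi/11)^2<4/49$ suffices.
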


\begin{proof}
Fix $a\ge2$ and set
\[
\theta:=\frac{\pi}{2a+7},
  \quad
  \sigma:=2-2\cos\theta
 =\frac1{p_{a+3}},
  \quad
  z:=2\cos\theta.
\]
Let $R=W(a,\ell)$ and \(\mathcal L(R):=L(R)+e_{u_0}e_{u_0}^{\mathsf T}.\) Since $M(R)=\mathcal L(R)^{-1}$,
\[
  \beta(R)=\frac1{\lambda_{\min}(\mathcal L(R))}.
\]

On the stem put
\[
x_j:=\frac{\sin((j+1)\theta)}{\sin\theta},
  \quad 0\le j\le a-1,
\]
and assign to each terminal leaf the value \[y:=\frac{x_{a-1}}{1-\sigma}.\] Since $a\ge2$, we have $\theta\le\pi/11$, and hence \(1-\sigma=2\cos\theta-1>0.\) Also $0<(j+1)\theta<\pi/2$ for $0\le j\le a-1$, so all coordinates of the test vector are positive. The recurrence $x_{j+1}=zx_j-x_{j-1}$ shows that $(\mathcal L(R)-\sigma I)x$ vanishes at every coordinate except possibly $u_{a-1}$. If $\mathcal R$ denotes the remaining residual, then
\[
\frac{\mathcal R}{x_{a-1}}
  =1-\sigma-\frac{\ell\sigma}{1-\sigma}
   -\frac{\sin((a-1)\theta)}{\sin(a\theta)}.
\]
Let $\tau=\lambda_{\min}(\mathcal L(R))$. Since $\mathcal L(R)$ is an irreducible symmetric nonsingular $M$-matrix, a $\tau$-eigenvector $w$ may be chosen strictly positive.  Taking the inner product with the displayed test vector gives
\[
(\tau-\sigma)w^{\mathsf T}x
=w^{\mathsf T}(\mathcal L(R)-\sigma I)x
=w_{a-1}\mathcal R.
\]
Since $w^{\mathsf T}x>0$ and $w_{a-1}>0$,
\[
\operatorname{sgn}(\tau-\sigma)=\operatorname{sgn}(\mathcal R).
\]
Using $\beta(R)=1/\tau$ and $p_{a+3}=1/\sigma$, we obtain
\[
\beta(W(a,\ell))>p_{a+3}
  \quad\Longleftrightarrow\quad
  \tau<\sigma
  \quad\Longleftrightarrow\quad
  \mathcal R<0.
\]
Since \((2a+7)\theta=\pi\), \(a\theta=\frac{\pi}{2}-\frac{7\theta}{2}, (a-1)\theta=\frac{\pi}{2}-\frac{9\theta}{2},\) and hence
\[
\frac{\sin((a-1)\theta)}{\sin(a\theta)}=
\frac{\cos(9\theta/2)}{\cos(7\theta/2)}.
\]
Writing \(z=2\cos\theta\), the standard multiple-angle identities give
\[
\frac{\cos(9\theta/2)}{\cos(7\theta/2)}=
\frac{(z-1)(z^3-3z-1)}{z^3-z^2-2z+1}.
\]
Since \(\sigma=2-z, 1-\sigma=z-1,\) the residual therefore satisfies
\[
\frac{\mathcal R}{x_{a-1}}=(z-1)-\frac{\ell(2-z)}{z-1}-\frac{(z-1)(z^3-3z-1)}{z^3-z^2-2z+1}  \\
=\frac{2-z}{z-1}\left(\frac{(z-1)^2(z+1)}{z^3-z^2-2z+1}-\ell\right).
\]
Moreover,
\[\frac{(z-1)^2(z+1)}{z^3-z^2-2z+1}=
1+\frac{z}{z^3-z^2-2z+1}.
\]
Since \(1<z<2\), the prefactor \((2-z)/(z-1)\) is positive. Consequently,
\[
\beta(W(a,\ell))>p_{a+3}
\quad\Longleftrightarrow\quad
\mathcal R<0
\quad\Longleftrightarrow\quad
\ell>G_3(z),
\]
where
\[
G_3(z):=1+\frac{z}{z^3-z^2-2z+1}.
\]
Put $D(z):=z^3-z^2-2z+1$. At the left endpoint,
\[
D\!\left(\frac{191}{100}\right)
  =\frac{499771}{10^6}>0,
\]
and
\[
D'(z)=3z^2-2z-2>0
  \quad\left(z\ge\frac{191}{100}\right).
\]
Thus $D(z)>0$ throughout the interval used below. Moreover,
\[
G_3'(z)
  =-
  \frac{(z-1)(2z^2+z+1)}{(z^3-z^2-2z+1)^2}
  <0.
\]

If $a\ge2$, then $\theta\le\pi/11$. Using $2\cos x>2-x^2$ and $\pi<22/7$ gives $z>191/100$, and hence
\[
G_3(z)
  <G_3\left(\frac{191}{100}\right)
  =\frac{2409771}{499771}<5.
\]
Therefore $\beta(W(a,5))>p_{a+3}$ for $a\ge2$.

If $a\ge3$, then $\theta\le\pi/13$, so similarly $z>97/50$. Hence
\[
G_3(z)
 <G_3\left(\frac{97}{50}\right)
  =\frac{324723}{82223}<4,
\]
and therefore $\beta(W(a,4))>p_{a+3}$ for $a\ge3$.
\end{proof}

For a fixed stem, the corresponding broom is the Perron lower envelope.

\begin{lemma}[Stem--broom domination]\label{lem:stem-broom-short}
Let $H$ be a rooted tree of order $h$ with root $s$, and let $d\ge1$. Then \(\beta(H[d])\ge\beta(W(d,h-1)).\) Equality holds if and only if every vertex of $H-s$ is adjacent to $s$.
\end{lemma}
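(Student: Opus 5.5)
We obtain $H[d]$'s broom lower bound by repeatedly applying \cref{lem:outward-grafting-short} in reverse: pulling subtrees toward $s$ only decreases the Perron value. First we check the shapes. $H[d]$ is a rooted tree whose stem $u_0-\cdots-u_{d-1}$ consists of the $d-1$ prepended vertices followed by $u_{d-1}=s$, with $H-s$ hanging below $s$. $W(d,h-1)$ is obtained from the same stem by replacing $H-s$ with $h-1$ leaves at $s$. Both have order $h+d-1$, and they coincide precisely when every vertex of $H-s$ is adjacent to $s$. That gives the ``if'' direction of the equality statement immediately.

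For the inequality we show that any rooted tree $H$ with root $s$ can be transformed into the star rooted at its center, $K_{1,h-1}$ rooted at $s$, by a finite sequence of \emph{inward} moves. Each move takes a vertex $w\in V(H)$ at depth at least $2$ in $H$ and a pendant subtree $X$ of $H$ attached at $w$, detaches $X$, and reattaches it at the parent of $w$. The sequence runs as follows: while some vertex of $H-s$ is not adjacent to $s$, pick a vertex $w$ at depth exactly $1$ in $H$ that has a child $x$. Move the pendant subtree $X$ rooted at $x$ (all descendants of $x$, together with $x$) from $w$ to $s$. Each such move strictly decreases the total depth $\sum_{v}\operatorname{depth}(v)$, since the depth of every vertex of $X$ drops by $1$ and nothing else changes. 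So the process terminates, and it terminates only at the star.

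Now view each move inside $H[d]$, with the root $u_0$. The reverse of a move reattaches $X$ from $s$ to $w$. Here $w$ is a proper descendant of $s$ not in $X$, and $X$ is a nonempty pendant subtree at $s$. So \cref{lem:outward-grafting-short} says the reverse move strictly increases $\beta$; equivalently, each inward move strictly decreases $\beta(H[d])$, because the prepended stem is unaffected. Chaining these comparisons gives $\beta(H[d])\ge\beta(W(d,h-1))$. The inequality is strict as soon as at least one move is performed, that is, whenever some vertex of $H-s$ is not adjacent to $s$. This gives the ``only if'' direction of the equality case.

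The only point needing care is verifying that the hypotheses of \cref{lem:outward-grafting-short} hold in $H[d]$ rather than just in $H$. We need that $s$ is an ancestor of $w$ in $H[d]$: this holds because the root $u_0$ lies above $s$ on the stem, so the ancestor relation on $V(H)$ is unchanged. We also need that $w\notin X$, which is true by construction. No spectral computation is needed beyond the strict Perron--Frobenius monotonicity already packaged in that lemma.
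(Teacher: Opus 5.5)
Your argument is correct, but it is organized differently from the paper's. The paper makes a single comparison. It identifies the stem of $H[d]$ with that of $W(d,h-1)$ and matches the $h-1$ vertices of $H-s$ with the $h-1$ leaves arbitrarily. It then checks directly that $M(H[d])\ge M(W(d,h-1))$ entrywise:
\begin{itemize}
\item entries involving stem vertices agree;
\item off-diagonal nonstem entries are at least $d$;
\item diagonal nonstem entries are at least $d+1$, and at least $d+2$ for any vertex of depth $\ge2$ in $H$.
\end{itemize}
Strict Perron--Frobenius monotonicity for positive matrices then gives the inequality and the equality case at once.

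You instead factor this comparison into a chain of inward moves. Each move is certified by \cref{lem:outward-grafting-short} applied in reverse, and a total-depth argument guarantees you reach the star. Your verification that the grafting hypotheses hold inside $H[d]$ is correct: $w$ is a child of $s$ after the move and $w\notin X$. The equality case also follows cleanly, since at least one move is made exactly when some vertex of $H-s$ is not adjacent to $s$.

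Both routes rest on the same mechanism: entrywise domination of bottleneck matrices plus strict Perron--Frobenius monotonicity. Yours reuses an existing lemma and exhibits a strictly monotone chain of intermediate trees. The paper's is shorter and needs no termination argument.

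One cosmetic slip: your general description of a move says $w$ has depth at least $2$, but the procedure you actually run takes $w$ at depth exactly $1$ and reattaches $X$ at $s$. The description should read ``depth at least $1$''. This does not affect the proof.
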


\begin{proof}
Write the initial stem of $H[d]$ as
\[
  r_0-r_1-\cdots-r_{d-1}=s,
\]
where $r_0$ is the root, and let $w_1,\dots,w_{h-1}$ be the vertices of $H-s$. Compare $H[d]$ with $W(d,h-1)$ by identifying the stem vertices and the $h-1$ remaining vertices.

Entries of the two bottleneck matrices involving only stem vertices are equal. If one index is a stem vertex, the two root paths have the same common initial segment in both trees. Each root path to a nonstem vertex in $H[d]$ contains at least $d+1$ vertices, and every pair of such paths has at least the whole $d$-vertex stem in common. Thus
\(M(H[d])\ge M(W(d,h-1))\) entrywise, and Perron--Frobenius monotonicity gives the inequality. Equality holds exactly when every nonstem vertex is adjacent to $s$.
\end{proof}

These estimates already force 6 vertices on each side whenever the two center sets are disjoint.

\begin{lemma}[Six-vertex barrier]\label{lem:six-vertex-sides}
Let $T$ satisfy $d(T)>0$, and choose $s,c,B,H$ as in \cref{sec:pathification}. Then
\(
|V(B)|\ge6,
  |V(H)|\ge6.
\)
In particular, disjoint center sets require at least twelve vertices.
\end{lemma}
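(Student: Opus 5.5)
The plan is to combine the two inequalities of \cref{lem:two-branch} with crude extremal bounds on each side. Write $h:=|V(H)|$ and $m:=|V(B)|$, and use the following inputs:
\begin{itemize}
\item the adjacency inequality $\rhoA(H-s)\ge 2\cos\frac{\pi}{m+1}$;
\item the Perron inequality $\beta(H[d])\le p_{m-d}$, which by \cref{lem:stem-broom-short} upgrades to $\beta(W(d,h-1))\le p_{m-d}$;
\item the relation $m\ge d+1$.
\end{itemize}
On the adjacency side, $H-s$ has $h-1$ vertices. A tree on $k$ vertices has spectral radius at most $\sqrt{k-1}$, the star value, and $\rhoA$ of a forest is controlled by its largest component. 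Hence $\rhoA(H-s)\le\sqrt{h-2}$. On the Perron side, $W(d,\ell)$ is a rooted tree properly containing the endpoint-rooted path $P_{d+1}$ when $\ell\ge2$. So \cref{lem:rooted-subtree} gives $\beta(W(d,\ell))>p_{d+1}$. Grafting leaves onto $u_{d-1}$ shows $\beta(W(d,\ell))$ increases with $\ell$.

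\emph{Step 1: $m\ge6$.} Suppose $m\le5$. Since $m\ge2$, the adjacency inequality gives $\rhoA(H-s)\ge1$, so $h\ge3$. More precisely, $H-s$ must contain a component of spectral radius at least $2\cos\frac{\pi}{m+1}$, which forces the following:
\begin{itemize}
\item at least $2$ vertices when $m=2$;
\item at least $3$ vertices when $m=3$, since $2\cos\frac\pi4=\sqrt2$;
\item at least $4$ vertices when $m=4,5$, since the values $1.618$ and $\sqrt3$ are not reached on $3$ vertices.
\end{itemize}
Thus $h-1\ge m-1$ in the relevant range. Whenever $2d+1\ge m$, the bound $\beta(W(d,h-1))>p_{d+1}\ge p_{m-d}$ already contradicts the Perron inequality. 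This leaves only $d=1$ with $m\in\{4,5\}$. Here I use the closed form for the center-rooted star with $k$ leaves, $\beta=\tfrac12\bigl(k+2+\sqrt{k^2+4k}\bigr)$:
\begin{itemize}
\item For $m=4$, $k=h-1\ge3$ gives $\beta\ge 4.79$. This is not yet enough against $p_3\approx5.05$, so I use the sharper $h\ge5$, i.e.\ $k\ge4$, giving $\beta\ge5.83>p_3$.
\item For $m=5$ and $d=1$, the stem-broom bound $5.83$ does not beat $p_4\approx 8.29$.
\end{itemize}
In this last case I argue directly. $\rhoA(H-s)\ge\sqrt3$ with few vertices forces a $4$-vertex component equal to $K_{1,3}$, or a larger one. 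Then $H$ itself, rooted at $s$, contains a rooted path of $3$ vertices plus leaves, or has $h\ge6$. In the minimal configurations I compute $\beta(H)$ exactly, as the smallest root of a low-degree characteristic polynomial of $L+e_se_s^{\mathsf T}$, and compare it with $p_4$. I also feed in the exact structure of $B$: $B$ has $5$ vertices and radius at most $\rhoA(H-s)$, which rules out the star $K_{1,4}$ and so restricts $F$ and $p_{|V(F)|}$ further.

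\emph{Step 2: $h\ge6$.} Suppose $h\le5$. Then $\rhoA(H-s)\le\sqrt3$, so $2\cos\frac{\pi}{m+1}\le\sqrt3$ forces $m\le5$, contradicting Step~1. If one prefers to avoid relying on Step~1, the same finite table of cases finishes the argument. Together the two steps give $|V(T)|=m+h\ge12$.

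The main obstacle is the residual case $d=1$, $m=5$ with $h=5$ (or close to it). There the coarse stem-broom estimate is too weak, and one must use the actual branch $F$ at $c$ and the actual shape of $H$. I would first try to settle this case by enumerating the few rooted trees of order at most $5$ compatible with both inequalities and evaluating $\beta$ exactly. The explicit values needed are $p_1=1$, $p_2\approx2.618$, $p_3\approx5.05$ and $p_4\approx8.29$.
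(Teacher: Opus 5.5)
\textbf{There is a genuine gap: the residual case $d=1$, $m=5$ is not proved.}

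Most of your proposal is correct: the case $d\ge2$ (via $\beta(W(d,h-1))>p_{d+1}\ge p_{m-d}$ when $m\le2d+1$), the case $m\le3$, the case $(d,m)=(1,4)$ via the star formula, and the reduction in Step~2 from $h\le5$ to $m\le5$.

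For $(d,m)=(1,5)$ you give a plan, not a proof, and the plan as written does not close the case.
\begin{itemize}
\item With $d=1$, your stem--broom bound only sees the star $W(1,h-1)$. Its Perron value $\tfrac12\bigl(h+1+\sqrt{(h-1)(h+3)}\bigr)$ exceeds $p_4\approx8.29$ only once $h\ge8$.
\item Enumerating rooted trees of order at most $5$ says nothing about your branch ``or $h\ge6$'', unless you first reduce to minimal rooted subtrees. You do not carry out that reduction.
\item The remark that $B\not\cong K_{1,4}$ is valid only when $\rhoA(H-s)<2$, which fails for larger $H$.
\end{itemize}
So $|V(B)|\ge6$ is not established, and Step~2, which invokes it, inherits the gap.

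\textbf{The missing idea is to apply stem--broom one level deeper, inside $H$.} You already know that $\rhoA(H-s)\ge\sqrt3>\sqrt2$ forces a component $K$ of $H-s$ of order $q\ge4$.
\begin{enumerate}
\item Root $K$ at its neighbor of $s$. Then $K[2]$ is a rooted subtree of $H$.
\item By \cref{lem:rooted-subtree,lem:stem-broom-short},
\[
\beta(H)\ge\beta(K[2])\ge\beta(W(2,q-1))\ge\beta(W(2,3)).
\]
\item It remains to show $\beta(W(2,3))>p_4$. The Rayleigh quotient of $M(W(2,3))$ at $(1,2,2,2,2)^{\mathsf T}$ equals $157/17>9$. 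On the other hand $p_4=1/(4\sin^2(\pi/18))<9$, because $\sin(\pi/18)>1/6$. Alternatively, $1/\beta(W(2,3))$ is the least root of $\tau^3-7\tau^2+10\tau-1$, which is about $0.108$.
\end{enumerate}
This settles $d=1$, $m\in\{4,5\}$ uniformly, for every $h$, with no enumeration and no information about $B$.

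The paper also avoids your case split in $d$. The principal submatrix of $M(H[d])$ on $V(H)$ is $M(H)+(d-1)J_h\ge M(H)$, so
\[
\beta(H)\le\beta(H[d])\le p_{m-d}\le p_{m-1}.
\]
It then suffices to beat $p_{m-1}$: with an endpoint-rooted $P_3$ inside $H$ when $m\le3$, and with $W(2,3)$ when $m\in\{4,5\}$.
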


\begin{proof}
Write $d=d(T)$, $m=|V(B)|$, and $h=|V(H)|$. On the vertex set $V(H)$, the corresponding principal submatrix of $M(H[d])$ is \(M(H)+(d-1)J_h,\) where $J_h$ is the all-ones matrix. Since \(M(H)+(d-1)J_h\ge M(H)\) entrywise, Perron--Frobenius monotonicity, followed by Cauchy interlacing for the principal submatrix of $M(H[d])$, gives
\[
\beta(H)
  \le
\rho\!\left(M(H)+(d-1)J_h\right)
 \le
  \beta(H[d])
  \le
p_{m-d}
  \le
p_{m-1}.
\]
Moreover, \cref{lem:two-branch} gives
\begin{equation}\label{eq:six-vertex-constraints}
m\ge2,
  \quad
\rhoA(H-s)\ge2\cos\frac{\pi}{m+1}.
\end{equation}

First note that \(\beta(W(2,3))>p_4.\) With the root first, then the terminal stem vertex, and then the three leaves, the bottleneck matrix of $W(2,3)$ is
\[
M=
\begin{pmatrix}
  1&1&1&1&1\\
  1&2&2&2&2\\
  1&2&3&2&2\\
  1&2&2&3&2\\
  1&2&2&2&3
  \end{pmatrix}.
\]
For $x=(1,2,2,2,2)^{\mathsf T}$,
\[
\frac{x^{\mathsf T}Mx}{x^{\mathsf T}x}
  =\frac{157}{17}>9.
\]
Thus $\beta(W(2,3))>9$. On the other hand, strict concavity of sine on $[0,\pi/6]$ gives
\[
\sin\frac{\pi}{18}>
  \frac13\sin\frac{\pi}{6}=\frac16,
\]
and therefore
\[
 p_4=\frac{1}{4\sin^2(\pi/18)}<9.
\]
Consequently
\begin{equation}\label{eq:W23-p4}
\beta(W(2,3))>p_4.
\end{equation}

For every tree $K$ of order $q$,
\begin{equation}\label{eq:tree-radius-small}
\rho(A(K))\le\sqrt{q-1}.
\end{equation}  
Indeed, after ordering the two bipartition classes,
\[
A(K)=\begin{pmatrix}0&N\\ N^{\mathsf T}&0\end{pmatrix},
\]
so
\[
\rho(A(K))=\|N\|_2\le\|N\|_{\mathrm F}=\sqrt{|E(K)|}=\sqrt{q-1}.
\]

Suppose first that $m\le3$. Then $\rhoA(H-s)\ge1$ by \eqref{eq:six-vertex-constraints}; hence some component of $H-s$ contains an edge. Together with the root $s$, it contains an endpoint-rooted $P_3$. Thus
\[
\beta(H)\ge p_3>p_2\ge p_{m-1},\] a contradiction.

Now suppose $4\le m\le5$. Then \(\rhoA(H-s)\ge2\cos\frac{\pi}{5}>\sqrt2.\) By \eqref{eq:tree-radius-small}, some component $K$ of $H-s$ has order $q\ge4$. Root $K$ at its neighbor of $s$. The rooted tree $K[2]$ is a subtree of $H$, and \cref{lem:rooted-subtree,lem:stem-broom-short} give
\[
\beta(H)
 \ge\beta(K[2])
  \ge\beta(W(2,q-1))
  \ge\beta(W(2,3))
 >p_4
  \ge p_{m-1},
\]
again impossible. Hence $m\ge6$.

Finally, \(\rhoA(H-s)\ge2\cos\frac{\pi}{7}>\sqrt3.\) Another application of \eqref{eq:tree-radius-small} shows that some component of $H-s$ has at least five vertices.  Adding $s$ yields $h\ge6$.
\end{proof}

\subsection{Sharp separation}

\begin{proof}[Proof of \cref{thm:intro-separation}]
Put \(d:=\dist_T\bigl(\SpecC(T),\CharC(T)\bigr).\) Choose $s,c,B,H,m$ as in \cref{sec:pathification}, and put $h:=|V(H)|$. Then
\[
  |V(T)|=m+h.
\]
By \cref{lem:six-vertex-sides}, $m,h\ge6$.  Moreover,
\cref{lem:two-branch} gives
\begin{align}
  \rhoA(H-s)&\ge 2\cos\frac{\pi}{m+1},
  \label{eq:sep-adj-short}\\
  \beta(H[d])&\le p_{m-d}.
  \label{eq:sep-perron-short}
\end{align}
Suppose for a contradiction that \(m+h\le2d+10.\) Since \(h\ge6\) by \cref{lem:six-vertex-sides}, the boundary value \(h=6\) must be separated from \(h\ge7\). For \(h\ge7\), the above order bound already gives \(m-d\le d+3\), which is precisely the range controlled by the rooted-broom estimate.

\medskip\noindent
\textbf{Case 1: $h\ge7$.}
Then \(m-d\le d+10-h\le d+3.\) By \cref{lem:stem-broom-short}, \(\beta(H[d])\ge\beta(W(d,h-1)).\) Since $h-1\ge6$, the rooted tree $W(d,5)$ is a proper rooted subtree of $W(d,h-1)$. Therefore, by \cref{lem:rooted-subtree,lem:broom-benchmarks},
\[
\beta(H[d])
  >\beta(W(d,5))
  >p_{d+3}
  \ge p_{m-d},
\]
contradicting \eqref{eq:sep-perron-short}.

\medskip\noindent
\textbf{Case 2: $h=6$.}
Then $m\le2d+4$.  If $m\le7$, then $m-d\le5$, and
\[
\beta(H[d])
\ge\beta(W(d,5))
  >p_{d+3}
 \ge p_5
  \ge p_{m-d},
\]
a contradiction.

Hence $m\ge8$. From \eqref{eq:sep-adj-short},
\(
  \rhoA(H-s)
  \ge2\cos\frac{\pi}{m+1}
  \ge2\cos\frac{\pi}{9}.
\)
The forest $H-s$ has five vertices. If it is disconnected, every component has order at most four, so $\rhoA(H-s)\le\sqrt3<2\cos(\pi/9)$. Thus $H-s$ is connected. 
There are three trees of order five up to isomorphism: $P_5$, the tree with degree sequence $(3,2,1,1,1)$, and $K_{1,4}$. Their spectral radii are, respectively,
\[
\sqrt3,\quad 2\cos\frac{\pi}{8},\quad 2.
\]
Hence \(H-s\cong K_{1,4}\). The root $s$ is attached either to the center of this star or to one of its leaves. Accordingly,
\[
H[d]\cong W(d+1,4)
  \quad\text{or}\quad
H[d]\cong W(d+2,3).
\]
By repeated outward grafting, \(\beta(W(d+2,3))>\beta(W(d+1,4)).\) Since $d+1\ge3$, \cref{lem:broom-benchmarks} gives
\[
\beta(H[d])
 \ge\beta(W(d+1,4))
  >p_{d+4}.
\]
But $m-d\le d+4$, contradicting \eqref{eq:sep-perron-short}.

This exhausts the possibilities, so \(|V(T)|=m+h\ge2d+11.\)
\end{proof}

\begin{corollary}\label{cor:separation-upper}
If $T$ is an $n$-vertex tree and $d(T)\ge2$, then
\[
d(T)\le\left\lfloor\frac{n-11}{2}\right\rfloor.\]
\end{corollary}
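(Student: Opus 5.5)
This corollary follows directly from \cref{thm:intro-separation}; the only work is to turn the order bound into a bound on $d(T)$ and round to an integer.

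Let $T$ be an $n$-vertex tree with $d:=d(T)\ge2$. By \cref{thm:intro-separation},
\[
n=|V(T)|\ge 2d+11,
\]
which rearranges to $d\le (n-11)/2$. Since $d$ is an integer, it is at most the largest integer not exceeding $(n-11)/2$. This gives $d\le\lfloor (n-11)/2\rfloor$, as claimed.

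The bound is automatically nonvacuous. The hypothesis $d\ge2$ forces $n\ge15$, so the floor on the right is at least $2$ and no small-order case needs separate treatment. The substantive work is already in \cref{thm:intro-separation}; here nothing remains beyond this rearrangement.
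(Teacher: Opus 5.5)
Your proposal is correct and matches the paper: the corollary is stated without separate proof immediately after Theorem~\ref{thm:intro-separation}, as the direct rearrangement of $n\ge 2d(T)+11$ using the integrality of $d(T)$. Your side remark that $d(T)\ge2$ forces $n\ge15$ is accurate but not needed for the argument.
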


\begin{remark}
The appearance of \(K_{1,4}\) is forced by the boundary case \(h=6\); it is not imposed by the later extremal construction.
\end{remark}

\section{The exact extremal distance}\label{sec:exact-distance}

The four-leaf broom family attains the upper bounds, beginning with $B(7,4)$ at order $12$.

\subsection{A distance bound from the characteristic set}

\begin{lemma}\label{lem:char-distance-short}
Let $T$ be a tree of order $n$, let $x\in V(T)$, and let $c\in\CharC(T)$. Then
\[
d_T(x,c)\le\left\lfloor\frac n2\right\rfloor.
\]
\end{lemma}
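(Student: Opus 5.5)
The plan is to compare the branch at a characteristic vertex that contains $x$ with a competing Perron branch on the other side. \cref{lem:rooted-subtree} gives a lower bound for the $x$-side branch, and \cref{lem:path-max-perron} gives an upper bound for the competitor. Together they force the competitor to have at least as many vertices as the path toward $x$. Counting vertices then gives the bound. If $x=c$ there is nothing to prove, so assume $x\neq c$.

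\emph{Step 1: a vertex $c^*$ closest to $x$.} Let $c^*$ be the vertex of $\CharC(T)$ nearest to $x$, and put $k:=d_T(x,c^*)$. Since $|\CharC(T)|\le2$ and $\CharC(T)$ is a vertex or an edge, either $c=c^*$, or $\CharC(T)=\{c,c^*\}$ is an edge with $d_T(x,c)=k+1$.

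\emph{Step 2: compare the two branches at $c^*$ (the case $k\ge1$).} Let $G_-$ be the branch at $c^*$ containing $x$, rooted at the neighbor of $c^*$. It contains the root-to-$x$ path, which is an endpoint-rooted $P_k$, so \cref{lem:rooted-subtree} gives $\beta(G_-)\ge p_k$.

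I claim some branch $F\ne G_-$ at $c^*$ satisfies $\beta(F)\ge\beta(G_-)$. This follows from \cref{thm:perron-orientation}, split by the shape of $\CharC(T)$:
\begin{itemize}
\item If $\CharC(T)=\{c^*\}$, part (i) or (ii) applies. Indeed, if $G_-$ were the unique Perron branch at $c^*$, part (ii) would give $\CharC(T)\neq\{c^*\}$. So some other branch ties or beats it.
\item If $\CharC(T)=\{c^*,c'\}$ is an edge, then by the choice of $c^*$ the vertex $c'$ is not in $G_-$. Part (iii) then says the branch $F$ at $c^*$ containing $c'$ is the unique Perron branch, so $\beta(F)>\beta(G_-)$ strictly.
\end{itemize}

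Combining this with \cref{lem:path-max-perron} gives $p_k\le\beta(G_-)\le\beta(F)\le p_{|V(F)|}$. Since $(p_m)$ is strictly increasing, $|V(F)|\ge k$.

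\emph{Step 3: count vertices.} Suppose first $c=c^*$. Since $G_-$, $\{c^*\}$ and $F$ are disjoint, $n\ge k+1+k$, so $d_T(x,c)=k\le\lfloor (n-1)/2\rfloor$.

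Otherwise $c=c'$ is the far endpoint of the edge, and $F$ is the branch containing $c$ with $\beta(F)>\beta(G_-)\ge p_k$ strictly. If $|V(F)|\le k$, then $\beta(F)\le p_k$, a contradiction, so $|V(F)|\ge k+1$. Hence $n\ge k+1+(k+1)$, and $d_T(x,c)=k+1\le n/2$.

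The degenerate case $k=0$, i.e.\ $x=c^*\ne c$, gives $d_T(x,c)=1\le\lfloor n/2\rfloor$ because $n\ge2$.

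The only delicate point is the strictness in the far-endpoint case. It uses the uniqueness of the Perron branch in part (iii), and without it the count would be one vertex short. Everything else is a direct application of the monotonicity lemmas already established.
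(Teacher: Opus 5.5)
Your proof is correct and follows essentially the same route as the paper. You take the characteristic vertex nearest to $x$ and bound the $x$-side branch below by $p_k$ using an endpoint-rooted path. Then the Perron-branch orientation, with the strict inequality coming from uniqueness in the edge case, combined with \cref{lem:path-max-perron}, forces the competing branch to have at least $k$ (respectively $k+1$) vertices, and counting finishes it; the paper merely organizes this as a split on whether $\CharC(T)$ is a vertex or an edge, reaching $\lfloor (n-2)/2\rfloor+1=\lfloor n/2\rfloor$ for the far endpoint exactly as you do.
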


\begin{proof}
Suppose first that $\CharC(T)=\{v\}$. Let $h=d_T(x,v)$. If $h=0$ there is nothing to prove. Otherwise, let $B$ be the branch at $v$ containing $x$. As a rooted tree, $B$
contains an endpoint-rooted $P_h$, so $\beta(B)\ge p_h$. Since $v$ is a characteristic vertex, there is another branch $B'$ at $v$ with $\beta(B')\ge\beta(B)$. If $m'=|V(B')|$, then \(p_{m'}\ge\beta(B')\ge p_h,\) so $m'\ge h$. Also $|V(B)|\ge h$, and hence
\[
n\ge1+|V(B)|+|V(B')|\ge2h+1.
\]
Thus $h\le\lfloor(n-1)/2\rfloor$.

Now suppose $\CharC(T)=\{u,v\}$ is an edge. Choose the endpoint $u$ nearer to $x$ and let
\[
h:=d_T(x,u)=\dist_T(x,\CharC(T)).
\]
If $h=0$ the claim is immediate. Let $B$ be the branch at $u$ containing $x$, and let $B'$ be the branch containing $v$. Then $\beta(B)\ge p_h$, while $B'$ is the unique Perron branch at $u$. Thus
\[
\beta(B')>p_h.
\]
If $m'=|V(B')|$, then $p_{m'}>p_h$, so $m'\ge h+1$. Consequently \(n\ge1+h+(h+1)=2h+2.\) Thus $h\le\lfloor(n-2)/2\rfloor$. The other endpoint of the characteristic edge is one step farther away, so every $c\in\CharC(T)$ satisfies
\[
d_T(x,c)\le\left\lfloor\frac{n-2}{2}\right\rfloor+1
  =\left\lfloor\frac n2\right\rfloor.
\]
\end{proof}

\subsection{The extremal broom}
Recall the broom $B(k,\ell)$ introduced in the Introduction. We label its underlying path as
\[
v_0-v_1-\cdots-v_k,\]
with the $\ell$ additional leaves attached at $v_k$.

\begin{figure}[H]
\centering
\begin{tikzpicture}[
  v/.style={circle,fill=black,inner sep=1.35pt},
  every node/.style={font=\small}
]

\begin{scope}

\node[v,label=below:$v_0$] (a0) at (0,0) {};
\node[v,label=below:$v_1$] (a1) at (0.85,0) {};

\node at (1.75,0) {$\cdots$};

\node[v,label=below:$v_{k-1}$] (akm) at (2.75,0) {};
\node[v,label=below:$v_k$] (ak) at (3.75,0) {};

\draw (a0)--(a1);
\draw (a1)--(1.35,0);
\draw (2.15,0)--(akm);

\draw[very thick] (akm)--(ak);

\node[v] (al1) at (4.65,0.95) {};
\node[v] (al2) at (4.95,0.32) {};
\node[v] (al3) at (4.95,-0.32) {};
\node[v] (al4) at (4.65,-0.95) {};

\draw (ak)--(al1);
\draw (ak)--(al2);
\draw (ak)--(al3);
\draw (ak)--(al4);

\node[above=5pt] at ($(akm)!0.5!(ak)$) {$\SpecC$};
\node at (2.25,1.45) {$(a)\ B(k,4)$};

\end{scope}

\begin{scope}[xshift=7.0cm]

\node[v,label=below:$v_0$] (b0) at (0.00,0) {};
\node[v,label=below:$v_1$] (b1) at (0.65,0) {};
\node[v,label=below:$v_2$] (b2) at (1.30,0) {};
\node[v,label=below:$v_3$] (b3) at (1.95,0) {};
\node[v,label=below:$v_4$] (b4) at (2.60,0) {};
\node[v,label=below:$v_5$] (b5) at (3.25,0) {};
\node[v,label=below:$v_6$] (b6) at (3.90,0) {};
\node[v,label=below:$v_7$] (b7) at (4.55,0) {};

\draw (b0)--(b1);
\draw (b1)--(b2);
\draw (b2)--(b3);
\draw (b3)--(b4);

\draw[very thick,densely dashed] (b4)--(b5);

\draw (b5)--(b6);

\draw[very thick] (b6)--(b7);

\node[v] (bl1) at (5.35,0.95) {};
\node[v] (bl2) at (5.60,0.32) {};
\node[v] (bl3) at (5.60,-0.32) {};
\node[v] (bl4) at (5.35,-0.95) {};

\draw (b7)--(bl1);
\draw (b7)--(bl2);
\draw (b7)--(bl3);
\draw (b7)--(bl4);

\node[above=5pt] at ($(b4)!0.5!(b5)$) {$\CharC$};
\node[above=5pt] at ($(b6)!0.5!(b7)$) {$\SpecC$};

\node at (2.65,1.45) {$(b)\ B(7,4)$};

\end{scope}

\end{tikzpicture}

\caption{
The four-leaf broom. In $(a)$ the heavy edge is $\SpecC(B(k,4))=\{v_{k-1},v_k\}$. For the twelve-vertex broom in $(b)$, the dashed heavy edge is $\CharC(B(7,4))=\{v_4,v_5\}$,
whereas the solid heavy edge is $\SpecC(B(7,4))=\{v_6,v_7\}$.}
\label{fig:four-leaf-broom}
\end{figure}

The two configurations needed below are shown in \cref{fig:four-leaf-broom}.

\begin{lemma}\label{lem:broom-center-short}
For every $k\ge2$, \(\SpecC(B(k,4))=\{v_{k-1},v_k\}.\)
\end{lemma}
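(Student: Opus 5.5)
We will apply the adjacency branch criterion, \cref{lem:adjacency-branch}, twice: once at $v_k$ and once at $v_{k-1}$. At each of these two vertices we compare the spectral radii of the branches.

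\textbf{Step 1: the vertex $v_k$.} The branches at $v_k$ are four isolated vertices, each of spectral radius $0$, and the path $v_0-\cdots-v_{k-1}\cong P_k$, of spectral radius $2\cos\frac{\pi}{k+1}>0$ since $k\ge2$. So the path is the unique maximum branch. Part (ii) of \cref{lem:adjacency-branch} then gives $\SpecC\subseteq\{v_0,\dots,v_k\}$ and $\SpecC\ne\{v_k\}$. Moreover, if $v_k\in\SpecC$, then $\SpecC=\{v_{k-1},v_k\}$.

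\textbf{Step 2: the vertex $v_{k-1}$.} The branches at $v_{k-1}$ are the path $P_{k-1}$ (empty when $k=1$, which does not occur; for $k=2$ it is a single vertex) and the star $K_{1,4}$ centred at $v_k$. Their radii are $2\cos\frac{\pi}{k}<2$ and $\rho(A(K_{1,4}))=2$, so the star is the unique maximum branch. Part (ii) of the lemma then puts $\SpecC$ inside $K_{1,4}\cup\{v_{k-1}\}$. Intersecting with Step 1, $\SpecC\subseteq\{v_{k-1},v_k\}$, and $\SpecC\ne\{v_{k-1}\}$.

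\textbf{Step 3: conclusion.} From Step 2, $\SpecC$ is not $\{v_{k-1}\}$, and from Step 1 it is not $\{v_k\}$. Since $\SpecC$ is a vertex or an edge lying inside $\{v_{k-1},v_k\}$, it must be $\{v_{k-1},v_k\}$. This agrees with the final assertion of part (ii), and nothing further needs to be checked.

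The only possible obstacle is confirming strict inequalities in the branch comparisons. At $v_k$ this is $0<2\cos\frac{\pi}{k+1}$. At $v_{k-1}$ it is $2\cos\frac{\pi}{k}<2$, which holds for every finite $k$. Both are immediate from \cref{thm:path-min-adj} and the explicit path spectra, so the argument is short.
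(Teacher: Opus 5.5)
Your argument is correct, and it takes a genuinely different (and shorter) route from the paper's. You apply \cref{lem:adjacency-branch}(ii) at the two adjacent vertices $v_k$ and $v_{k-1}$ and intersect the resulting regions.
\begin{itemize}
\item At $v_k$, $P_k$ beats four isolated vertices.
\item At $v_{k-1}$, $K_{1,4}$ with radius $2$ beats $P_{k-1}$ with radius $2\cos(\pi/k)<2$.
\end{itemize}
Because the two orientations point toward each other, the center is squeezed onto $\{v_{k-1},v_k\}$. The two exclusions $\SpecC\neq\{v_k\}$ and $\SpecC\neq\{v_{k-1}\}$ then finish the proof.

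The paper instead works directly from the definition of the spectral center. It sandwiches $\lambda=\lambda_2(B(k,4))$ by interlacing, deleting $v_{k-1}$ and then $v_k$, to get $\rho(P_{k-1})\le\lambda\le\rho(P_k)$. It proves the upper inequality is strict by an explicit eigenvector argument, which essentially re-runs the strictness step from the proof of \cref{lem:adjacency-branch} in this special case. It then checks that $\{v_{k-1},v_k\}$ is feasible while neither endpoint alone is, and concludes by uniqueness of the minimal feasible subtree.

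Your route reuses the general branch criterion already established in the preliminaries and so avoids that duplication. The paper's route is more self-contained and yields the explicit estimate $\rho(P_{k-1})\le\lambda_2(B(k,4))<\rho(P_k)$ as a byproduct. Nothing later in the paper uses that estimate, however; only the location of $\SpecC(B(k,4))$ is needed.
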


\begin{proof}
Let $T=B(k,4)$, let $a=v_{k-1}$ and $b=v_k$, and put $\lambda=\lambda_2(T)$. Deleting $a$ gives
\[
T-a\cong P_{k-1}\dot\cup K_{1,4}.\]
Since \(\rho(K_{1,4})=2>\rho(P_{k-1}),\) the second largest eigenvalue of $T-a$ is \(\rho(P_{k-1})\). Hence interlacing gives \(\lambda=\lambda_2(T)\ge \rho(P_{k-1}).\)
Deleting $b$ gives \(T-b\cong P_k\dot\cup4K_1,\) whose largest eigenvalue is \(\rho(P_k)\). Hence interlacing gives \(\lambda=\lambda_2(T)\le\rho(P_k).\)

The latter inequality is strict. Suppose $\lambda=r:=\rho(P_k)$. Let $x$ be an $r$-eigenvector of $T$, let $t=x_b$, let $z$ be the restriction of $x$ to $P_k$, and let
$y_1,\dots,y_4$ be the coordinates at the four new leaves. The eigenvalue equations on $P_k$ give \((A(P_k)-rI)z=-t e,\) where $e$ is the coordinate vector of the endpoint $v_{k-1}$. If $p>0$ is the Perron vector of $P_k$, multiplication by $p^{\mathsf T}$ gives $t=0$. The four leaf equations then give $y_1=\cdots=y_4=0$. Hence $z$ is a nonzero multiple of $p$, but the eigenvalue equation at $b$ becomes $z_{v_{k-1}}=0$, a contradiction. Thus
\[
\rho(P_{k-1})\le\lambda<\rho(P_k)<2.
\]
Now \(T-\{a,b\}\cong P_{k-1}\,\dot\cup\,4K_1,\) so every component has spectral radius at most $\lambda$. Deleting $a$ alone leaves the component $K_{1,4}$ of radius $2>\lambda$, while deleting $b$ alone leaves $P_k$ of radius $>\lambda$. Hence the minimal feasible subtree is exactly the edge $ab$.
\end{proof}

\begin{proposition}[Small orders]\label{prop:small-orders-analytic}
For $3\le n\le11$ one has $\Delta_n=0$, whereas $\Delta_{12}=1$. For the twelve-vertex broom $B(7,4)$,
\(
\SpecC(B(7,4))=\{v_6,v_7\},
  \quad
\CharC(B(7,4))=\{v_4,v_5\}.
\)
\end{proposition}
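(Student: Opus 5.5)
The proof has two halves: an upper bound, which comes from the six-vertex barrier, and a lower bound, which is an explicit computation on $B(7,4)$.

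\emph{Upper bound.} Suppose $d(T)>0$. Then \cref{lem:six-vertex-sides} gives $|V(T)|=|V(B)|+|V(H)|\ge 12$. Hence $\Delta_n=0$ for $3\le n\le 11$; at these orders the two center sets always meet. For $n=12$ suppose $d(T)\ge2$. Then \cref{thm:intro-separation} gives $12\ge 2d(T)+11\ge15$, which is impossible. So $\Delta_{12}\le1$. (This is also \cref{cor:separation-upper} read at $n=12$.)

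\emph{Lower bound: the spectral center.} It suffices to exhibit $B(7,4)$ with the stated centers, because the edges $\{v_6,v_7\}$ and $\{v_4,v_5\}$ are at distance $d_T(v_5,v_6)=1$. \cref{lem:broom-center-short} with $k=7$ gives $\SpecC(B(7,4))=\{v_6,v_7\}$ directly.

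\emph{Lower bound: the characteristic set.} I would apply \cref{thm:perron-orientation} at two vertices. At $v_5$ the only comparison is between two branches:
\begin{itemize}
\item the branch $P_5=\{v_4,\dots,v_0\}$ toward $v_0$, with Perron value $p_5$;
\item the branch toward $v_6$, which is the rooted tree $W(2,4)$ (stem $v_6v_7$ plus four leaves at $v_7$).
\end{itemize}
I need $p_5>\beta(W(2,4))$. At $v_4$ I need the opposite orientation. There the branch toward $v_0$ is $P_4$, and the branch toward $v_5$ is $W(3,4)$. \cref{lem:broom-benchmarks} with $a=3$ gives $\beta(W(3,4))>p_6>p_4$, so the unique Perron branch at $v_4$ points toward $v_5$. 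At $v_5$, once $p_5>\beta(W(2,4))$ is established, the unique Perron branch points toward $v_4$. Part (ii) of \cref{thm:perron-orientation}, applied at both vertices, then puts $\CharC$ inside $\{v_4,v_5,\dots\}\cap\{v_5,v_4,\dots,v_0\}=\{v_4,v_5\}$. Since $\CharC$ is neither $\{v_4\}$ nor $\{v_5\}$, it equals $\{v_4,v_5\}$.

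\emph{Main obstacle: $p_5>\beta(W(2,4))$.} I would reuse the residual computation from the proof of \cref{lem:broom-benchmarks}, now with $\theta=\pi/11$ so that $\sigma=1/p_5$, but with the stem length $a=2$ tied to $p_{a+3}=p_5$. That computation shows $\beta(W(2,\ell))>p_5$ if and only if $\ell>G_3(z)$, with $z=2\cos(\pi/11)$. So I need $G_3(z)>4$ at this specific $z$, which is the same as $D(z)<z/3$. One has $z\approx1.9190$, which gives $D(z)\approx0.5396$ and $z/3\approx0.6397$. Since $G_3(z)\approx4.56>4$, the residual is positive, and $\beta(W(2,4))<p_5$ strictly. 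For rigor I would bound $z$ in a rational interval, say $1.918<z<1.92$. $D$ is increasing there, so $D(z)<D(1.92)\approx0.5455<0.639<z/3$. Equality cannot occur, so the orientation at $v_5$ is strict.
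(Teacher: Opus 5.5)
Your proof is correct, and it has the same structure as the paper's: \cref{lem:six-vertex-sides} for $n\le11$, \cref{thm:intro-separation} to exclude $d\ge2$ at $n=12$, \cref{lem:broom-center-short} for $\SpecC(B(7,4))$, and \cref{thm:perron-orientation} at $v_4$ and $v_5$. The difference is in how you certify the two Perron comparisons.

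At $v_5$, the paper uses a maximal-row-sum bound. The row sums of $M(W(2,4))$ are $6,11,12,12,12,12$, so $\beta(W(2,4))\le12$. Also $2-2\cos(\pi/11)<(\pi/11)^2<4/49$, so $p_5>49/4$. At $v_4$ the paper uses only that $W(3,4)$ properly contains an endpoint-rooted $P_4$.

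You instead reopen the test-vector computation in the proof of \cref{lem:broom-benchmarks} at $a=2$. You need the reverse direction of $\operatorname{sgn}(\tau-\sigma)=\operatorname{sgn}(\mathcal R)$. That proof establishes it for all $a\ge2$, although the lemma's statement records only one direction, so the step is legitimate. It also yields more than the paper's bound. Combined with the lemma's $\ell=5$ case, it gives $4<G_3(2\cos(\pi/11))<5$. Hence $\ell=4$ is exactly the largest number of leaves for which the Perron branch at $v_5$ of $B(7,\ell)$ still points toward $v_0$. The cost is a trigonometric evaluation where a one-line row-sum bound would do.

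Two small fixes are needed:
\begin{enumerate}
\item Your numerics are slightly off: $D(2\cos(\pi/11))\approx0.546$, $D(1.92)\approx0.551$, and $G_3\approx4.51$. The conclusion is unaffected, since the margin against $z/3>0.639$ is ample.
\item The enclosure $1.918<z<1.92$ needs a line of justification, e.g.\ $2-x^2<2\cos x<2-x^2+x^4/12$ with $3.14<\pi<22/7$.
\end{enumerate}
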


\begin{proof}
If $3\le n\le11$, \cref{lem:six-vertex-sides} forbids disjoint center sets. Hence $d(T)=0$ for every tree of order $n$, and $\Delta_n=0$.

Now let $T=B(7,4)$. By \cref{lem:broom-center-short},
\(
\SpecC(T)=\{v_6,v_7\}.\)
At $v_5$, the branch toward $v_6$ is $W(2,4)$, while the other branch is the endpoint-rooted path $P_5$. The row sums of $M(W(2,4))$ are $6,11,12,12,12,12$, so
\(\beta(W(2,4))\le12.\) Since $2-2\cos t<t^2$ for $t>0$ and $\pi<22/7$,
\[
2-2\cos\frac{\pi}{11}
<\left(\frac{\pi}{11}\right)^2
  <\frac4{49}.
\]
Thus
\[
  p_5>\frac{49}{4}>12\ge\beta(W(2,4)),
\]
and the unique Perron branch at $v_5$ points toward $v_4$.

At $v_4$, the branch toward $v_5$ is $W(3,4)$ and the other branch is $P_4$. Since $W(3,4)$ properly contains an endpoint-rooted $P_4$, \cref{lem:rooted-subtree} gives
\(\beta(W(3,4))>p_4.\) Hence the unique Perron branch at $v_4$ points toward $v_5$, while, as shown above, the unique Perron branch at $v_5$ points toward $v_4$.
Let $B_4$ be the branch at $v_4$ containing $v_5$, and let $B_5$ be the branch at $v_5$ containing $v_4$. By \cref{thm:perron-orientation},
\[
\CharC(T)
 \subseteq
\bigl(B_4\cup\{v_4\}\bigr)
 \cap
\bigl(B_5\cup\{v_5\}\bigr)=
\{v_4,v_5\}.
\]
Since the maximum Perron branch is unique at both $v_4$ and $v_5$, neither vertex can by itself be the characteristic set. Therefore \(\CharC(T)=\{v_4,v_5\}.\)
We get $d(T)=1$, so $\Delta_{12}\ge1$. On the other hand, \cref{thm:intro-separation} rules out $d(T)\ge2$ for a twelve-vertex tree. Thus $\Delta_{12}=1$.
\end{proof}

\begin{proposition}\label{prop:broom-lower-short}
Let $n\ge13$ and put $T_n:=B(n-5,4)$. Then
\[
 \dist_{T_n}\bigl(\SpecC(T_n),\CharC(T_n)\bigr)
  \ge
  \left\lfloor\frac{n-11}{2}\right\rfloor.
\]
\end{proposition}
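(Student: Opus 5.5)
By \cref{lem:broom-center-short}, with $k=n-5\ge8$, we have $\SpecC(T_n)=\{v_{k-1},v_k\}$. It therefore suffices to locate $\CharC(T_n)$ on the path far enough from $v_{k-1}$. Put $t:=\lfloor(n-11)/2\rfloor$. I will show that $\CharC(T_n)\subseteq\{v_0,\dots,v_{k-1-t}\}$. Then every characteristic vertex is at distance at least $t$ from $v_{k-1}$, hence at distance at least $t+1$ from $v_k$, which gives the claim.

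By \cref{thm:perron-orientation}, it is enough to find one vertex $v_j$ with $j=k-1-t$ at which the branch toward $v_0$ is weakly Perron-dominant. If the maximum is tied, \cref{thm:perron-orientation}(i) gives $\CharC=\{v_j\}$. If the branch toward $v_0$ is the unique Perron branch, part (ii) places $\CharC$ inside that branch together with $v_j$, that is, inside $\{v_0,\dots,v_j\}$. At $v_j$ the branch toward $v_0$ is the endpoint-rooted path $P_j$. The branch toward $v_k$ is the rooted broom $W(k-j,4)=W(t+1,4)$. So the required inequality is
\[
\beta(W(t+1,4))\le p_{j}=p_{n-6-t}.
\]

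Next I will bound the right-hand index from below. Since $2t\le n-11$, we get $n-6-t\ge t+5=(t+1)+4$, and because $(p_m)$ is increasing it suffices to show $\beta(W(a,4))\le p_{a+4}$ for every $a\ge1$. This is the reverse-type counterpart of \cref{lem:broom-benchmarks}. I will prove it with the same test-vector method, using $\theta=\pi/(2a+9)$, $\sigma=1/p_{a+4}$, $z=2\cos\theta$, the stem sine vector $x_j=\sin((j+1)\theta)/\sin\theta$, and leaf value $x_{a-1}/(1-\sigma)$. The sign identity $\operatorname{sgn}(\tau-\sigma)=\operatorname{sgn}(\mathcal R)$ then reduces the claim to $\mathcal R\ge0$.

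To evaluate $\mathcal R$, note that $(a-1)\theta=\pi/2-11\theta/2$ and $a\theta=\pi/2-9\theta/2$. Hence the ratio $\sin((a-1)\theta)/\sin(a\theta)$ equals $\cos(11\theta/2)/\cos(9\theta/2)$, which is a rational function of $z$ via the Chebyshev identities. The residual condition becomes $\ell=4\le G_4(z)$ for an explicit rational function $G_4$, analogous to $G_3$. Here $z\in[2\cos(\pi/11),2)$, and I expect $G_4(z)\ge 1+z/D_4(z)$ with $G_4$ decreasing and tending to about $4.something$ as $z\to2$. I would verify $G_4(2)\ge4$ and monotonicity by an explicit derivative computation, mirroring the proof of \cref{lem:broom-benchmarks}. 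This algebraic verification is the main obstacle. If the limit at $z=2$ is below $4$ for large $a$, the fallback is to compare instead with $p_{a+5}$. The bound $n-6-t\ge t+5$ has slack when $n$ is even, but not when $n$ is odd, so the odd case would then need a separate check.

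Small values of $a$ (say $a=1,2$) can be checked directly as a sanity test, using row-sum bounds on $M(W(a,4))$ as in \cref{prop:small-orders-analytic}. For instance, with $a=2$ the row sums give $\beta(W(2,4))\le12<p_5<p_6$, which covers $n=13,14$. The general case then follows from the monotone estimate on $G_4$.
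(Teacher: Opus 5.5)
Your argument is correct and takes a different route from the paper. The step you flagged as the main obstacle closes in one line.

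\textbf{The flagged step.} The rooted broom $W(a,4)$ has exactly $a+4$ vertices and is not a path. So \cref{lem:path-max-perron} gives $\beta(W(a,4))<p_{a+4}$ directly, strictly, for every $a\ge1$. You need no test vector, no $G_4$, no fallback to $p_{a+5}$, and no separate odd case. When $n$ is odd and $n-6-t=t+5$, both branches at $v_j$ have the same order $\lfloor n/2\rfloor$, and strictness still makes the path branch the unique Perron branch. Had you done the computation, you would have found
\[
G_4(z)=\frac{z(z-1)(z^2-2)}{z^4-z^3-3z^2+2z+1},
\]
with $G_4(2)=4$ exactly rather than somewhat above $4$. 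The margin vanishes as $a\to\infty$ precisely because this is a same-order comparison. The monotone estimate would still have worked, but it is wasted effort.

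\textbf{Comparison with the paper.} The paper never locates $\CharC(T_n)$. It applies the general \cref{lem:char-distance-short} at $x=v_0$: every characteristic vertex lies within $\lfloor n/2\rfloor$ of $v_0$. The triangle inequality with $d(v_0,s)\ge n-6$ then finishes. Your index $j=k-1-t$ equals $\lfloor n/2\rfloor$ in both parities. So your conclusion $\CharC(T_n)\subseteq\{v_0,\dots,v_{\lfloor n/2\rfloor}\}$ is exactly that lemma's bound at $v_0$, obtained by a single orientation at $v_j$ instead of the lemma's branch-counting argument. Your version is self-contained and gives an explicit localization of the characteristic set on the broom. The paper's version isolates a reusable statement valid for every tree and every base vertex, so it needs no broom-specific Perron comparison. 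Both rest on the same fact: at fixed order, an endpoint-rooted path maximizes the Perron value.
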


\begin{proof}
Put $k=n-5$. By \cref{lem:broom-center-short}, \(\SpecC(T_n)=\{v_{k-1},v_k\}.\) Let $s\in\SpecC(T_n)$ and $c\in\CharC(T_n)$. Then
\[
  d_{T_n}(v_0,s)\ge k-1=n-6,
\]
whereas \cref{lem:char-distance-short} gives \(d_{T_n}(v_0,c)\le\left\lfloor\frac n2\right\rfloor.\) By the triangle inequality,
\[
d_{T_n}(s,c)
  \ge
n-6-\left\lfloor\frac n2\right\rfloor=
\left\lfloor\frac{n-11}{2}\right\rfloor.
\]
Taking the minimum over $s$ and $c$ proves the result.
\end{proof}

\begin{proof}[Proof of \cref{thm:main}]
The orders $3\le n\le12$ are settled by \cref{prop:small-orders-analytic}. Assume henceforth that $n\ge13$. The lower bound follows from \cref{prop:broom-lower-short}.

For the reverse inequality, let $T$ be any tree of order $n$ and put
\[
d:=\dist_T\bigl(\SpecC(T),\CharC(T)\bigr).
\]
If $n\in\{13,14\}$ and $d\ge2$, then \cref{thm:intro-separation} gives $n\ge2d+11\ge15$, a contradiction. Hence $d\le1$, which is exactly $\lfloor(n-11)/2\rfloor$.

Now let $n\ge15$. If $d\le1$, there is nothing to prove. If $d\ge2$, \cref{thm:intro-separation} gives $n\ge2d+11$, and therefore
\[
 d\le\left\lfloor\frac{n-11}{2}\right\rfloor.
\]
Thus the upper and lower bounds coincide. The broom $B(n-5,4)$ is extremal for every $n\ge13$, while $B(7,4)$ is extremal at $n=12$ by \cref{prop:small-orders-analytic}.
\end{proof}

In particular, Theorem~\ref{thm:main} gives \(\Delta_n=\frac{n}{2}+O(1),\) and hence
\[
\frac{\Delta_n}{n}\longrightarrow\frac12 \quad (n\to\infty).
\]
Thus the largest possible separation between the adjacency spectral center and the characteristic set is asymptotically one half of the order of the tree.

\begin{remark}
No explicit location of $\CharC(B(n-5,4))$ is needed. The general bound in \cref{lem:char-distance-short} already gives the sharp lower estimate.
\end{remark}

\section{Concluding remarks}\label{sec:conclusion}
For rooted trees of fixed order, the path is light for adjacency spectral radius and heavy for bottleneck Perron value. Pathification turns this into a monotonicity statement for the two centers. Before any sharp estimate is needed, the same branch inequalities already force six vertices on each side of two disjoint centers; hence positive separation cannot occur below order twelve. The broom $B(7,4)$ shows that twelve is best possible.

For $d(T)\ge2$, the rooted-broom comparison sharpens the argument to
\[
|V(T)|\ge2d(T)+11.
\]
The complete extremal value is therefore
\[
\Delta_n=
  \begin{cases}
    0, & 3\le n\le11,\\[1mm]
    1, & n=12,\\[1mm]
    \displaystyle\left\lfloor\frac{n-11}{2}\right\rfloor, & n\ge13.
\end{cases}
\]

We have not classified all extremal trees. That is a different question, and pursuing it would obscure the point of the argument: the exact extremal value is controlled by a competition between two local spectral weights. It would be interesting to determine whether a similar opposition of local branch weights can control the separation between other pairs of central sets on trees.

\section*{Declaration of competing interest}
The authors declare no competing interests.

\section*{Acknowledgments}
The authors acknowledge support from the Scientific Research Start-up Fund for High-level Talents of West Anhui University (Chaochao Zhu, Project No.~WGKQ2021072; Shipei Hu, Project No.~WGKQ2022069).

\section*{Declaration of Use of AI Tools}
During the preparation of this work, the authors used AI to assist with language refinement, manuscript organization, preliminary checks of the clarity and internal consistency of the mathematical exposition, and the drafting of TikZ code for schematic figures. All mathematical statements, proofs, references, figures,
and final formulations were independently checked and verified by the authors. The authors take full responsibility for the content of the article.


\begin{thebibliography}{99}

\bibitem{Neumaier1982}
A.~Neumaier,
The second largest eigenvalue of a tree,
\emph{Linear Algebra Appl.} 46 (1982) 9--25.
\url{https://doi.org/10.1016/0024-3795(82)90022-2}

\bibitem{BrouwerHaemers2012}
A.E.~Brouwer, W.H.~Haemers,
\emph{Spectra of Graphs},
Springer, New York, 2012.
\url{https://doi.org/10.1007/978-1-4614-1939-6}

\bibitem{KumarMoharPragadaZhan2025}
H.~Kumar, B.~Mohar, S.~Pragada, H.~Zhan,
On the second largest adjacency eigenvalue of trees with given diameter,
\emph{Linear Multilinear Algebra} 73 (13) (2025) 2944--2972.
\url{https://doi.org/10.1080/03081087.2025.2484265}

\bibitem{Fiedler1975}
M.~Fiedler,
A property of eigenvectors of nonnegative symmetric matrices and its
application to graph theory,
\emph{Czechoslovak Math. J.} 25 (4) (1975) 619--633.
\url{https://doi.org/10.21136/CMJ.1975.101357}

\bibitem{Bapat2014}
R.B.~Bapat,
\emph{Graphs and Matrices},
second ed.,
Springer, London, 2014.
\url{https://doi.org/10.1007/978-1-4471-6569-9}

\bibitem{KirklandNeumannShader1996}
S.~Kirkland, M.~Neumann, B.L.~Shader,
Characteristic vertices of weighted trees via Perron values,
\emph{Linear Multilinear Algebra} 40 (4) (1996) 311--325.
\url{https://doi.org/10.1080/03081089608818448}

\bibitem{AndradeDahl2017}
E.~Andrade, G.~Dahl,
Combinatorial Perron values of trees and bottleneck matrices,
\emph{Linear Multilinear Algebra} 65 (12) (2017) 2387--2405.
doi:10.1080/03081087.2016.1274363


\bibitem{AndradeCiardoDahl2022}
E.~Andrade, L.~Ciardo, G.~Dahl,
Perron values and classes of trees,
\emph{Linear Algebra Appl.} 639 (2022) 135--158.
\url{https://doi.org/10.1016/j.laa.2022.01.005}

\bibitem{Patra2007}
K.L.~Patra,
Maximizing the distance between center, centroid and characteristic set
of a tree,
\emph{Linear Multilinear Algebra} 55 (4) (2007) 381--397.
\url{https://doi.org/10.1080/03081080701208512}

\bibitem{AbreuFritscherJustelKirkland2017}
N.~Abreu, E.~Fritscher, C.~Justel, S.~Kirkland,
On the characteristic set, centroid, and centre for a tree,
\emph{Linear Multilinear Algebra} 65 (10) (2017) 2046--2063.
\url{https://doi.org/10.1080/03081087.2017.1304520}

\bibitem{SmithSzekelyWangYuan2018}
H.~Smith, L.~Sz\'ekely, H.~Wang, S.~Yuan,
On different ``middle parts'' of a tree,
\emph{Electron. J. Combin.} 25 (3) (2018) P3.17.
\url{https://doi.org/10.37236/6408}

\bibitem{PandeyPatra2022}
D.~Pandey, K.L.~Patra,
Different central parts of trees and their pairwise distances,
\emph{Linear Multilinear Algebra} 70 (19) (2022) 3790--3802.
\url{https://doi.org/10.1080/03081087.2020.1856027}

\bibitem{Ciardo2020}
L.~Ciardo,
A Fiedler center for graphs generalizing the characteristic set,
\emph{Linear Algebra Appl.} 584 (2020) 197--220.
\url{https://doi.org/10.1016/j.laa.2019.09.015}

\bibitem{CollatzSinogowitz1957}
L.~Collatz, U.~Sinogowitz,
Spektren endlicher Grafen,
\emph{Abh. Math. Sem. Univ. Hamburg} 21 (1957) 63--77.
\url{https://doi.org/10.1007/BF02941924}


\end{thebibliography}
\end{document}